\newtheorem{teo}{Theorem}
\newtheorem{lem}[teo]{Lemma}
\newtheorem{cor}[teo]{Corollary}
\newtheorem{pro}[teo]{Proposition}
\newtheorem{rem}[teo]{Remark}
\newtheorem{exa}[teo]{Example}
\newcommand{\F}{\mathcal{F}}
\newcommand{\cE}{\mathcal{E}}
\newcommand{\cG}{\mathcal{G}}
\newcommand{\cH}{\mathcal{H}}
\newcommand{\cK}{\mathcal{K}}
\newcommand{\cU}{\mathcal{U}}
\newcommand{\cP}{\mathcal P}
\newcommand{\cF}{\mathcal F}
\newcommand{\cS}{\mathcal S}
\newcommand{\cT}{\mathcal T}
\newcommand{\cM}{\mathcal M}
\newcommand{\cN}{\mathcal N}
\newcommand{\cX}{\mathcal X}
\newcommand{\cY}{\mathcal Y}
\newcommand{\NN}{\mathbb{N}}
\newcommand{\CC}{\mathbb{C}}
\newcommand{\RR}{\mathbb{R}}
\newcommand{\la}{\lambda}
\newcommand{\eps}{\varepsilon}
\newcommand{\inc}{\subseteq}
\newcommand{\rai}{^{1/2}}
\newcommand{\mrai}{^{-1/2}}
\newcommand{\U}{\cU (\cH )}
\newcommand{\pint}[1]{\displaystyle \left \langle #1 \right\rangle}
\newcommand{\nui}[1]{|\!|\!| #1 |\!|\!|}
\newcommand{\QED}{\hfill $\square$}
\DeclareMathOperator{\tr}{tr}
\DeclareMathOperator{\ind}{ind}
\DeclareMathOperator{\rk}{rk}
\begin{document}

\title{Procrustes problems and Parseval quasi-dual frames}

\author{{G. Corach$^{a,b}$ \ , \ P. Massey$^{b,c}$  \ , \ M. Ruiz$^{b,c}$} \\ \\  {\small  $^a$ Universidad de Buenos Aires, Argentina} \\ {\small} \\ {\small $^b$ Instituto Argentino de Matem\'atica ``A.P. Calder\'on'', Argentina}\\ {\small} \\ {\small $^c$ Universidad de La Plata, Argentina}}
\date{}

\maketitle

\begin{abstract} 
Parseval frames have particularly useful properties, and in some cases, they can be used to reconstruct signals which were analyzed by a non-Parseval frame. In this paper, we completely describe the degree to which such reconstruction is feasible. 
Indeed, notice that for fixed frames $\cF$ and $\cX$ with synthesis operators $F$ and $X$, the operator norm of $FX^*-I$ measures the (normalized) worst-case error in the reconstruction of vectors when analyzed with $\cX$ and synthesized with $\cF$. Hence, for any given frame $\cF$, we compute explicitly the infimum of the operator norms of $FX^*-I$, where $\cX$ is any Parseval frame. The $\cX$'s that minimize this quantity are called Parseval quasi-dual frames of $\cF$. Our treatment considers both finite and infinite Parseval quasi-dual frames.
\end{abstract}

{\bf keywords:}{ Procrustes problems , Parseval frames , Dual frames.}

{AMS subclass:} 42C15 ; 15A60 ; 65F35. 

\section{Introduction}
Let $\cH$ be a separable complex Hilbert space. A sequence $\cF=\{f_i\}_{i\in \NN}$ is a frame for $\cH$ if there exist constants $A,B>0$ such that
\begin{equation}\label{def frame}
A\|f\|^2\leq \sum_{i\in \NN} \, |\pint{f,f_i}|^2\leq B\|f\|^2, \quad \forall f\in \cH.
\end{equation}
The biggest $A$ and the least $B$ with the properties above are called the {\it optimal frame bounds} of the frame $\cF$ and denoted $A_\F$ and $B_\F$ respectively. If $A_\F=B_\F=1$ then $\cF$ is called {\sl Parseval frame}.
Roughly speaking, a frame $\cF$ is a generator set for $\cH$ when we allow linear combinations of elements of $\cF$ with coefficients in $\ell_2(\NN)$.

More precisely, given a frame $\cF$ define the synthesis operator of $\cF$ by
$F:\ell_\NN^2\rightarrow \cH,$  $F((\alpha_i)_{i\in\NN})=\sum_{i\in\NN} \alpha_i f_i$, and the analysis operator of $\cF$ by $F^*:\cH \rightarrow \ell_\NN^2$, i.e. 
$F^*f=(\pint{f,f_i})_{i\in \NN}$. Notice that the inequality to the right in Eq. \eqref{def frame} implies that $F$ (and hence $F^*$) is a well defined, bounded linear transformation.
Moreover, if we let $S_\cF=FF^*\in B(\cH)^+$ then 
\[
S_\cF f=FF^* f = \sum_{i\in \NN}\, \pint{f,f_i}f_i, \quad \forall f\in \cH
\] and the defining inequalities \eqref{def frame} are equivalent to the operator inequalities
$A\cdot I\leq S_\cF\leq B\cdot I.$ 
Therefore $S_\cF=FF^*$, the so-called frame operator of $\cF$, is a positive invertible operator. 
 On the other hand, the identities
\begin{equation}\label{ident recons}
f=S_\cF S^{-1}_\cF f=\sum_{i\in \NN}\, \pint{S_\cF^{-1}f,f_i} f_i =\sum_{i\in \NN}\, \pint{f,S_\cF^{-1}f_i} f_i \, , \quad \forall f\in \cH 
\end{equation} show that $\cF$ allows for linear representations of vectors in $\cH$, where the sequence of coefficients $(\langle f, S^{-1}_\cF f_i\rangle)_{i\in\NN}=F^*(S^{-1}_\cF\,f)\in\ell_2(\NN)$. Also notice that, if we let $\cF^\#=\{S^{-1}_\cF\,f_i\}_{i\in\NN}$ then $\cF^\#$ is a frame for $\cH$ with frame operator $S_\cF^{-1}$. 
This shows that coefficients given by the map $\cH\ni f\mapsto (\langle f, S^{-1}_\cF f_i\rangle)_{i\in\NN}\in\ell_2(\NN)$ depend continuously on $f\in\cH$ i.e. $\cF$ allows for linear and stable reconstruction 
formulas.

An important feature of frames $\cF$ for $\cH$ is their redundancy. Indeed, it is well known (see \cite{Crisbook,Grochbook}) that the lack of uniqueness in the linear representation induced by a frame is an advantage in this context, as it provides tools to deal with problems that arise in applications of frame theory.  A typical measure of redundancy of a frame $\cF$ is given by the excess of $\cF$, which is the dimension of the nullspace of its synthesis operator $F$. In case $\cF$ has nonzero excess then there exist infinitely many dual frames for $\cF$ i.e. frames $\cG=\{g_i\}_{i\in\NN}$ such that 
\[f=\sum_{i\in\NN} \pint{f,g_i}\, f_i\  , \quad \forall f\in \cH\  . \]
Notice that the frame $\cF^\#$ defined above is a dual frame for $\cF$, called the canonical dual frame.

It is easy to see that the set of dual frames for $\cF$ is in bijection with the set of bounded linear operators $G:\ell_\NN^2\rightarrow \cH$ such that $FG^*=I_\cH$, via the map $G\mapsto \cG=\{Ge_i\}$. This last fact incidentally shows that if $\cG$ is a dual frame for $\cF$ then $\cF$ is also a dual frame for $\cG$  i.e. 
\[f=\sum_{i\in\NN} \pint{f,g_i}\, f_i=\sum_{i\in\NN} \pint{f,f_i}\, g_i \  , \quad \forall f\in \cH\  . \]
 In this case we say that $(\cF,\cG)$ is a dual pair of frames for $\cH$.
  It is worth pointing out that the canonical dual frame $\cF^\#$ 
corresponds to the adjoint of the Moore-Penrose pseudo-inverse of $F$ under the previous bijection; this fact indicates that $\F^\#$ plays a key role among the set of all dual frames for $\cF$. 

Nevertheless, given a redundant frame $\cF$ then the canonical dual frame $\cF^\#$ is not always the best choice for a dual of $\cF$. 
For example, numerical stability comes into play when dealing with reconstruction formulas derived from a dual pair $(\cF,\cG)$; in this case a measure of stability of the reconstruction algorithm for fixed $\cF$ is given by the condition number of the frame operator $S_\cG$ of $\cG$. It is known \cite{MRS} that the dual frames $\cG$ that minimize this condition number are different (in general) from the canonical dual (see \cite{BLem,LH,WES} for other examples of this phenomena). 
In this vein, D. Han \cite{Han} characterized those frames $\cF$ for which there exists a dual frame for $\cF$, denoted by $\cX=\{x_i\}_{i\in\NN}$, which is a Parseval frame i.e. for which the frame bounds coincide with 1. Thus, in this case $S_\cX=I$ and therefore the condition number of $S_\cX$ is minimum. The conditions found in \cite{Han} for the existence of a Parseval dual of a frame $\cF=\{f_i\}_{i\in\NN}$ turn out to be equivalent to the conditions
for the existence of a larger Hilbert space $\cK\supset \cH$, an orthonormal basis $\{k_i\}_{i\in\NN}$ of $\cK$ and an oblique projection $Q:\cK\rightarrow \cH$ such that $Qk_i=f_i$, for $i\in \NN$,  found by
J. Antezana, G. Corach, M. Ruiz and D. Stojanoff in \cite{ACRS}.

In case a frame $\cF$ does not admit a Parseval dual frame, then we can consider one of the following alternatives: we can search for dual frames that are optimal for numerical stability (or such that they are less complex to compute) i.e. obtaining (theoretical) perfect reconstruction formulas for less stable dual frames - or we can search for Parseval frames $\cX$, which are optimally stable, that minimize the reconstruction error when considering the reconstruction formulas derived from the pair $(\cF,\cX)$. 
In this paper we shall consider the second approach and search for Parseval frames $\cX$ that minimize the (normalized) worst case reconstruction error for a (blind) reconstruction algorithm derived from the pair $(\cF,\cX)$, (formally) following the recent analytic scheme from the theory of optimal dual frames for erasures of a fixed frame \cite{LHan,LHantin,LH,MRSrob}. Explicitly,
we search for Parseval frames  $\cX=\{x_i\}_{i\in\NN}$ - i.e. such that $XX^*=I_\cH$ - such that they minimize the worst case reconstruction error
\[ \sup_{f\in\cH\, , \ \|f\|=1} \| \sum_{i\in\NN} \pint{f,x_i}\, f_i - f\| =\|FX^*-I_\cH\|\ \]
where $\|FX^*-I_\cH\|$ stands for the operator norm on $B(\cH)$. To this end we introduce the optimal bound 
\[ \alpha(F)=\inf \{  \|FX^*-I\|\, , \ {XX^*=I_\cH}\} \ .\] 
As we shall see, 
$\alpha(F)$ depends on the spectrum of the frame operator $S_\cF$ as well as on the excess of $\cF$.
In case the optimal bound $\alpha(F)$ is attained (e.g. finite frames for finite dimensional Hilbert spaces), 
we introduce the set of Parseval quasi-dual frames 
defined as 
\[\mathfrak{X}(F)=\{ \cX=\{x_i\}_{i\in\NN}: \; XX^*=I, \, \alpha(F)=\|FX^*-I\|\}.\]
In these cases, $\alpha(F)$ is the worst case reconstruction error 
of an encoding-decoding algorithm based on an optimal pair $(\cF,\cX)$, where $\cX$ is Parseval quasi-dual frames for $\cF$.

If  $\cH$ is finite dimensional, the previous problem fits into the setting of the so called Procrustes problems, which are relevant in statistics, shape theory, numerical analysis and optimization, among other disciplines. The reader is referred to the book by Gower and Dijksterhuis \cite{GoDi} and to the survey \cite{NHi} of Higham, for many references (and to learn about the myth of the bandit Procrustes). We should also mention the papers by Eld\'en and Parks \cite{ElPa}, the results of Watson \cite{Wat1,Wat2} and Mathias \cite{Mat}, who showed that $\alpha$ effectively depends on the norm. Kintzel \cite{Kni} dealt with Procrustes problems in finite dimensional Krein spaces, and Peng, Hu, and Zhang \cite{PeHuZh} with weighted Procrustes problems. All these references study the problems in finite dimensional spaces. The present paper seems to be the first to relate problems in frame theory in arbitrary Hilbert spaces with Procrustes techniques.

The paper is organized as follows. In Section \ref{sec prelims} we revise some of the main results from \cite{ACRS} and \cite{Han} that characterize the existence of Parseval dual frames of a frame $\F$ for $\cH$.
In Section \ref{sec finite frames} we consider the finite frames for a finite dimensional Hilbert space $\CC^n$; hence, we study the problem of computing the optimal bound $\alpha_{\nui{\cdot}}(\F)$ for an arbitrary unitarily invariant norm (u.i.n.) $\nui{\cdot}$ for $M_n(\CC)$. In particular, we show that there exists $\cX$ that is a Parseval $\nui{\cdot}$-quasi dual, for every u.i.n. In Section \ref{sec 4} we consider the computation of $\alpha(\F)$ for frames $\F$ in a separable infinite dimensional (complex) Hilbert space $\cH$.
In case the excess of $\F$ is infinite, we compute $\alpha(\F)$ explicitly and show that $\F$ always admits Parseval quasi-duals $\cX$ which also satisfy that $FX^*$ is a multiple of the identity. This last fact allows us to derive simple reconstruction formulas from the pair $(\F,\cX)$. 
 If $\F$ has finite excess, then we obtain an explicit formula for $\alpha(\F)$ that depends on the spectrum of the Gramian operator $F^*F$ and the excess of $\F$. For the convenience of the reader, we present several technical results related with the computation of $\alpha(\F)$ in this case in a separate Appendix (Section \ref{App}): our strategy relies on Rogers' work \cite{Rogers} on approximation of bounded operators by unitary operators.
 
\section{Preliminaries}\label{sec prelims}

{\bf Notations and terminology}. In what follows, $\cH$ denotes a separable complex Hilbert space. If $\cK$ is another Hilbert space then $B(\cK,\cH)$ denotes the Banach space of bounded linear transformations from $\cK$ to $\cH$. If $\cK=\cH$ then we denote $B(\cH)=B(\cH,\cH)$ the algebra of bounded linear operators acting on $\cH$, endowed with the operator norm. We denote by $M_{n,m}(\CC)$ the space of $n\times m$ complex matrices and identify
$M_{n,m}(\CC)=B(\CC^m,\CC^n)$. If $m=n$ then we write $M_n(\CC)=M_{n,n}(\CC)$. 
If $F\in B(\cK,\cH)$ then we denote by $N(F)$ and $R(F)$ the nullspace and range of $F$ respectively.
We denote by $\U$ the group of unitary operators acting on $\cH$.
  
  \smallskip

\noindent {\bf Frames in Hilbert spaces}. As mentioned in the Introduction, a sequence $\{f_i\}_{i\in \NN}$ in a Hilbert space $\cH$ is a {\it frame} for  $\cH$ if there exist $A,B>0$ such that Eq. \eqref{def frame} holds.
The optimal constants $A_\cF,\,B_\cF$ are called the  {\it optimal frame bounds}. If the frame bounds of $\cF$ are equal to 1 we say that $\cF$ is {\it  Parseval}. More generally, we say that $\cF$ is tight if the frame bounds coincide. 

       Given a frame $\cF=\{f_i\}_{i\in \NN}$ for $\cH$, let $F\in B(\ell_2, \cH)$,  $F^*\in B(\cH, \ell_2)$ and $S_\cF=FF^*\in B(\cH)^+$ denote the synthesis, analysis and frame operators of $\cF$ respectively.  Notice that $A,B>0$ satisfy Eq. \eqref{def frame} if and only if $A\,I\leq S_\cF\leq B\,I$. Hence, the optimal frame bounds $0<A_\cF\leq B_\cF$ are given by                   
\begin{equation}\label{cotas optimas} A_\cF \rai =\gamma(F):=\inf\{\|Fx\|\, : \; x\in N(F)^\perp, \, \|x\|=1\}=\|F^\dagger\|^{-1}  \quad \text{ and } \quad  B_\cF\rai=\|F\|,
\end{equation}
where $F^\dagger$ denotes the Moore-Penrose pseudoinverse of $F$ and $\gamma(F)$  its {\sl reduced minimum modulus}. 

These facts show that a frame $\cX=\{x_i\}_{i\in\NN}$ is Parseval if and only if $S_\cX=I_\cH$ or equivalently, if the synthesis operator $X\in B(\ell_2,\cH)$ is a coisometry. It is well known that Parseval frames have several nice properties; for instance, the spread of the eigenvalues of the frame operator $S_\cX$ is minimum, a fact that implies numerical stability of linear encoding-decoding schemes derived from $\cX$. 
On the other hand, the canonical dual of a Parseval frame $\cX$ coincides with $\cX$ so that the identities in Eq. \eqref{ident recons} become 
\[ f=\sum_{i\in\NN} \pint{f,x_i}\,x_i \ , \quad f\in\cH\, , \]
which is formally analogous to the linear representation derived from an orthonormal basis for $\cH$. 
Indeed, there is a close connection between Parseval frames and orthonormal bases, since the elements of a Parseval frame can be seen as the image of a orthonormal basis of a larger Hilbert space under a orthogonal projection (\cite{HanLar}).
This kind of  dilation property of Parseval frames was extended in \cite{ACRS} where it is shown that $A_\cF\geq 1$ and $\dim \overline{ R(S_\cF-I)}\leq \dim N(F)$ are necessary and sufficient conditions for the frame $\cF$ in order to be the image of a orthonormal basis under a oblique projection (see also \cite[10.4.]{HKLW} which is largely based in \cite{ACRS} and contains an elementary approach to these results). 
Years later, D. Han proved that those conditions also ensure the existence of a Parseval dual frame $\cX$ for $\cF$ (\cite{Han}).
We summarize all these results in the following:
\begin{teo}\label{hay dual}
Let $\cF=\{f_i\}_{i\in \NN}$ be a frame for $\cH$, with optimal frame bounds $0<A_\cF\leq B_\cF$ and synthesis operator $F:\ell_2\rightarrow \cH$. Then the following statements are equivalent:
\begin{enumerate}
\item There exists a Parseval frame $\cX=\{x_i\}_{i\in \NN}$ such that $x=\sum_{i\in \NN} \pint{x,x_i}\, f_i$, $\forall x\in \cH$.
\item \label{condiciones} $A_\cF\geq 1$ and $\dim \overline{ R(S_\cF-I)}\leq \dim N(F)$.
\item There exists a Hilbert space $\cK\supset \cH$ with an orthonormal basis $\{k_i\}$ and a oblique projection $Q\in B(\cK)$ onto $\cH$ such that $f_i=Qk_i$, for every $i\in \NN$.
\end{enumerate}
\QED
\end{teo}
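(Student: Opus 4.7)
The plan is to establish the three-way equivalence by proving (1)$\,\Leftrightarrow\,$(2) directly via a Moore--Penrose style parametrization of the isometric right inverses of $F$, and (1)$\,\Leftrightarrow\,$(3) by a Naimark-type dilation. The two equivalences can be handled independently.

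For (1)$\,\Leftrightarrow\,$(2), a Parseval dual $\cX$ is equivalent data to an isometric right inverse $W:=X^* \colon \cH \to \ell_2$ of $F$, since $XX^*=I_\cH$ amounts to $W^*W=I_\cH$ and $FX^*=I_\cH$ amounts to $FW=I_\cH$. Because $F$ is surjective, $R(F^*)$ is closed and $\ell_2 = R(F^*) \oplus N(F)$. Any right inverse of $F$ then decomposes uniquely as $W = F^* S_\cF^{-1} + W_2$, with $W_2 \colon \cH \to N(F)$ (the first summand being the Moore--Penrose right inverse, which satisfies $F(F^*S_\cF^{-1})=I_\cH$). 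Using the orthogonality of the two summands, a direct computation gives
\[ W^*W \;=\; S_\cF^{-1} + W_2^*W_2, \]
so the isometry condition reduces to $W_2^*W_2 = I - S_\cF^{-1}$. Such a $W_2$ exists if and only if $I - S_\cF^{-1} \geq 0$ (equivalently $S_\cF \geq I$, i.e.\ $A_\cF \geq 1$) and $\dim \overline{R(I - S_\cF^{-1})} \leq \dim N(F)$; since $S_\cF$ is invertible the latter dimension equals $\dim \overline{R(S_\cF - I)}$, which recovers condition (2) exactly.

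For (1)$\,\Rightarrow\,$(3), take $\cK:=\ell_2$ with its standard basis $\{e_i\}$ and embed $\cH$ isometrically into $\cK$ via $W=X^*$. Then $Q:=WF \in B(\ell_2)$ satisfies $Q^2 = W(FW)F = WF = Q$, has range $W(\cH) \cong \cH$, and $Qe_i = Wf_i$, which is identified with $f_i$. Conversely, for (3)$\,\Rightarrow\,$(1), let $P\colon \cK \to \cH$ be the orthogonal projection and set $x_i := Pk_i$. The Parseval property is immediate since $\sum_i |\pint{x,x_i}|^2 = \sum_i |\pint{Px,k_i}|^2 = \|x\|^2$ for $x\in\cH$, and applying the bounded operator $Q$ term-by-term to the orthonormal expansion of $x$ in $\cK$ gives $x = Qx = \sum_i \pint{x,k_i}\, Qk_i = \sum_i \pint{x,x_i}\, f_i$, so $(\cF,\cX)$ is a dual pair with $\cX$ Parseval.

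The main obstacle is the (2)$\,\Rightarrow\,$(1) half: one must concretely build $W_2$ landing in $N(F)$ with $W_2^*W_2 = I - S_\cF^{-1}$. The construction is to compose the positive square root $(I - S_\cF^{-1})^{1/2}$ with an isometric embedding of $\overline{R(I - S_\cF^{-1})}$ into $N(F)$, which exists precisely because of the dimension hypothesis together with the identity $\dim \overline{R(I - S_\cF^{-1})} = \dim \overline{R(S_\cF - I)}$ coming from invertibility of $S_\cF$. Everything else in the argument reduces to standard block-operator manipulations and the dilation identification above.
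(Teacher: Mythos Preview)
The paper does not give its own proof of this theorem: it is stated as a summary of results from \cite{ACRS} (for the equivalence $(2)\Leftrightarrow(3)$) and \cite{Han} (for $(1)\Leftrightarrow(2)$), with only a \QED mark. So there is no in-paper argument to compare against.

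That said, your argument is correct and is essentially the standard one underlying those references. A couple of remarks for polish. In the $(2)\Rightarrow(1)$ direction, the key identity $\overline{R(I-S_\cF^{-1})}=\overline{R(S_\cF-I)}$ is most cleanly justified by noting that both operators are self-adjoint with the same null space (since $S_\cF x=x$ iff $S_\cF^{-1}x=x$), so their closed ranges are the common orthogonal complement $N(S_\cF-I)^\perp$; your invertibility remark is correct but this phrasing avoids any issue with closures under a bounded bijection. In $(1)\Rightarrow(3)$, you should state explicitly that the identification of $\cH$ with $W(\cH)\subset\ell_2$ is via the isometry $W$, so that ``$\cK\supset\cH$'' is understood up to this unitary embedding; with that convention $Qe_i=Wf_i$ is indeed $f_i$ and $R(Q)=W(\cH)$ is closed because $W$ is isometric. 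In $(3)\Rightarrow(1)$, the term-by-term application of $Q$ to the orthonormal expansion is justified since $Q$ is bounded and the partial sums converge in norm. With these points made explicit, the proof is complete and self-contained.
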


\section{Parseval quasi-dual frames: finite frames}\label{sec finite frames}

Let $\F=\{f_i\}_{i=1}^m$ be a finite frame for $\cH=\CC^n$. As we have seen in the previous section, in some cases $\F$ admits
a dual Parseval frame $\cX=\{x_i\}_{i=1}^m$. Indeed, in the finite dimensional context
 the equivalence of items 1. and 2. in Theorem \ref{hay dual} imply the following:
\begin{pro}\label{prop 4.1.}Let $\F=\{f_i\}_{i=1}^m$ be a frame for $\cH=\CC^n$ with synthesis operator $F:\CC^m\rightarrow \CC^n$.
Let $\la=(\lambda_i)_{i=1}^n \in \RR^n$ be the eigenvalues of the frame operator $S_\F$, arranged in non-increasing order and counting multiplicities. 
Then $\F$ has a Parseval dual if and only if
\begin{enumerate}
\item $\la_n\geq 1$;
\item \label{item2} $2n-m\leq \dim N(S_\F-I)$.
\end{enumerate}
In particular, if $m\geq 2n$, the existence of a Parseval dual is guaranteed by $\la_n\geq 1$.
\end{pro}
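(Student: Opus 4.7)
The plan is to simply translate the two equivalent conditions of Theorem \ref{hay dual} (items 1 and 2) into the finite-dimensional language of eigenvalues of $S_\F$ and of the excess $m-n$ of the frame.

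First, I would observe that in finite dimensions the optimal lower frame bound $A_\F$ is precisely the smallest eigenvalue of the positive invertible operator $S_\F=FF^*\in M_n(\CC)^+$, i.e.\ $A_\F=\la_n$. Hence condition $A_\F\geq 1$ in Theorem \ref{hay dual} translates directly into $\la_n\geq 1$, giving item 1 of the proposition.

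Next, I would handle the dimension inequality $\dim\overline{R(S_\F-I)}\leq \dim N(F)$. The operator $S_\F-I\in M_n(\CC)$ is selfadjoint, so its range is closed and orthogonal to its nullspace; therefore
\[
\dim\overline{R(S_\F-I)}=\dim R(S_\F-I)=n-\dim N(S_\F-I).
\]
On the other hand, since $\F$ is a frame for $\CC^n$ the synthesis operator $F:\CC^m\to\CC^n$ is surjective (equivalently $S_\F=FF^*$ is invertible, by \eqref{cotas optimas}), so by the rank-nullity theorem $\dim N(F)=m-n$. Plugging both computations into Theorem \ref{hay dual}, item \ref{condiciones} becomes
\[
n-\dim N(S_\F-I)\leq m-n,
\]
which is exactly $2n-m\leq \dim N(S_\F-I)$, giving item \ref{item2}.

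Finally, for the last sentence I would simply note that if $m\geq 2n$ then $2n-m\leq 0\leq \dim N(S_\F-I)$, so item \ref{item2} holds automatically and only $\la_n\geq 1$ remains to be checked. There is no substantial obstacle here: the whole argument is a direct specialization, and the only mildly delicate point is recalling that selfadjointness of $S_\F-I$ is what lets us replace $\dim\overline{R(S_\F-I)}$ by $n-\dim N(S_\F-I)$.
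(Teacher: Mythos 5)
Your proof is correct and is essentially the paper's own argument: the paper simply notes that $A_\F=\la_n$ and $\dim N(F)=m-n$ and says the rest "follows by dimension arguments," which are exactly the rank--nullity computations you spell out. (A minor remark: in finite dimensions the range of $S_\F-I$ is automatically closed, so rank--nullity alone suffices without invoking selfadjointness.)
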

\begin{proof}
Notice that $A_\cF=\lambda_n$ and that  $\dim N(F)=m-n$ since $F$ is surjective. Then the proof  follows by dimension arguments.
\end{proof}
In case $m<2n$, the condition \ref{item2} in Proposition \ref{prop 4.1.} shows that the existence of a Parseval dual for $F$ is related to the multiplicity of 1 as an eigenvalue of $S_\F$.

\medskip

In case $\F$ has no Parseval dual frames, it is natural to consider those Parseval frames $\cX$ which minimize the reconstruction error based on an encoding-decoding scheme in terms of $\cX$ and $\F$. Following ideas from \cite{NHi},  the measure of the reconstruction error that we shall consider in this section is $\nui{FX^*-I}$, where $\nui{\cdot}$ is a unitarily invariant norm (u.i.n.) in $M_n(\CC)$ (see \cite{Bhatia} for a detailed account on these norms) and $F$ and $X$ denote the synthesis operators of $\F$ and $\cX$ respectively. That is, we are interested in computing the optimal bound 
\begin{equation}\label{defi alpha nui}
 \alpha_{\nui{\cdot}}(\F):=\min\{\nui{FX^*-I}:\ XX^*=I \  \}
\end{equation} 
and determining the optimal  Parseval quasi-dual frames which are the Parseval frames that attain this lower bound i.e.,  
\begin{equation}\label{defi alpha quasi duals}
\mathfrak{X}_{\nui{\cdot}}(\F):=\{\cX=\{x_i\}_{i=1}^m: \ XX^*= I \ , \ \alpha_{\nui{\cdot}}(\F)=\nui{FX^*-I}\}\ .
\end{equation} 
In what follows, we give a detailed description of the optimal bound $\alpha_{\nui{\cdot}}(\F)$ and the set of optimal 
 Parseval quasi-dual frames $\mathfrak{X}_{\nui{\cdot}}(\F)$ for a frame $\F$ and an arbitrary u.i.n.

\medskip

Let us begin by recalling the following Procrustes-type result due to Fan and Hoffman \cite{FanHof}.
\begin{teo}[\cite{FanHof}]\label{teo FanHof}
Let $A,\,U\in M_n(\CC)$ with $U$ unitary and such that $A=U\,|A|$, where $|A|=(A^*A)^{1/2}$.
Then, for any unitary matrix $W\in M_n(\CC)$ and any u.i.n. $\nui{\cdot}$ in $M_n(\CC)$ we have that 
\[\nui{A-U}\leq \nui{A-W}\leq \nui{A+U}\  .\] 
\end{teo}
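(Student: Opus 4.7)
The plan is to use the polar decomposition $A=U|A|$ together with the unitary invariance of $\nui{\cdot}$ to rewrite all three terms in ``polar'' form, and then to dispatch each of the two inequalities via a singular-value majorization combined with Ky Fan's dominance principle (which says that $\nui{X}\le\nui{Y}$ for every u.i.n.\ if and only if the singular-value sequences satisfy $s(X)\prec_{w}s(Y)$). Unitary invariance immediately gives
\[
\nui{A-U}=\nui{U(|A|-I)}=\nui{|A|-I},\qquad \nui{A+U}=\nui{|A|+I},
\]
so the claim becomes $\nui{|A|-I}\le \nui{A-W}\le \nui{|A|+I}$ for every unitary $W$.

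For the upper bound I would invoke Weyl's singular-value inequality $s_{j}(X+Y)\le s_{j}(X)+s_{1}(Y)$ with $X=A$ and $Y=-W$: since $s_{1}(-W)=1$ and $s_{j}(A)=\lambda_{j}(|A|)$, this yields the pointwise bound $s_{j}(A-W)\le s_{j}(A)+1=\lambda_{j}(|A|+I)=s_{j}(|A|+I)$ for every $j$, hence a fortiori $s(A-W)\prec_{w}s(|A|+I)$; Ky Fan dominance then delivers $\nui{A-W}\le \nui{|A|+I}$.

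For the lower bound the key ingredient is Mirsky's singular-value inequality: for arbitrary matrices $X,Y\in M_{n}(\CC)$, the non-increasing rearrangement of $(|s_{j}(X)-s_{j}(Y)|)_{j}$ is weakly majorized by $s(X-Y)$. Applied with $X=A$ and $Y=W$ (so that $s_{j}(W)=1$), this gives $|s_{j}(A)-1|^{\downarrow}\prec_{w}s(A-W)$. Since $|A|-I$ is Hermitian with eigenvalues $s_{j}(A)-1$, one has $s(|A|-I)=|s_{j}(A)-1|^{\downarrow}$; hence $s(|A|-I)\prec_{w}s(A-W)$, and Ky Fan dominance gives $\nui{|A|-I}\le \nui{A-W}$.

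The main obstacle will be the lower bound, and specifically Mirsky's inequality on which it rests. Although classical, Mirsky's inequality is not elementary: its standard proof proceeds via a Wielandt-type minimax characterization of singular values, or equivalently through an application of the Ky Fan maximum principle to the $2n\times 2n$ Hermitian dilation of $X-Y$. Once it is at hand, the proof of the theorem reduces to essentially a bookkeeping exercise with Ky Fan dominance.
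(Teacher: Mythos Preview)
The paper does not supply its own proof of this statement; it is quoted from \cite{FanHof} and used as a black box in Lemma~\ref{lem nui}. Your argument is correct: the reduction via unitary invariance to $\nui{|A|-I}\le\nui{A-W}\le\nui{|A|+I}$ is immediate, the upper bound follows from Weyl's inequality $s_j(A-W)\le s_j(A)+1$ (giving entrywise domination of singular values, hence weak majorization), and the lower bound is precisely Mirsky's theorem applied with $Y=W$. This is the standard modern route (essentially the treatment in \cite{Bhatia}); since the paper offers no proof there is nothing further to compare.
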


\begin{lem}\label{lem nui}
Let $\F=\{f_i\}_{i=1}^m$ be a frame for $\CC^n$ with synthesis operator $F$. Let $\cS$ be an $n$-dimensional subspace of $\CC^m$ and let $C_\cS=\{Y\in M_{n,m}(\CC) \, : \ Y^*Y=P_\cS\}$. Let $\nui{\cdot}$ be a u.i.n. on $M_n(\CC)$ and let $X\in C_\cS$ be such that 
\[\nui{\, FX^*-I \,}=\min_{Y\in C_\cS}\nui{\, FY^*-I\,}\ .\]
 In this case we have that 
\[ \nui{\, FX^*-I \,} =\nui{ \ |FX^*| - I \ }  \ .\]
\end{lem}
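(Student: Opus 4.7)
The plan is to exploit the fact that the feasible set $C_\cS$ is invariant under left multiplication by unitaries on $\CC^n$: if $X \in C_\cS$ and $V \in M_n(\CC)$ is unitary, then $(VX)^*(VX) = X^*V^*VX = X^*X = P_\cS$, so $VX \in C_\cS$ as well. This symmetry is the whole engine of the proof.

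First I would fix the optimal $X \in C_\cS$, set $A := FX^*$, and write its polar decomposition $A = V|A|$ with $V \in M_n(\CC)$ unitary. (In finite dimensions the partial isometry from the polar decomposition always extends to a unitary, so no invertibility hypothesis on $FX^*$ is needed.) Then I would consider the competitor $Y := VX \in C_\cS$ and compute
\[
FY^* = FX^* V^* = AV^* = V|A|V^*.
\]
By unitary invariance of $\nui{\cdot}$,
\[
\nui{FY^* - I} = \nui{V(|A| - I)V^*} = \nui{|A| - I}.
\]
Applying optimality of $X$ against this competitor already gives one direction:
\[
\nui{FX^* - I} \;\le\; \nui{FY^* - I} \;=\; \nui{|FX^*| - I}.
\]

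For the reverse inequality I would invoke Theorem \ref{teo FanHof} (Fan--Hoffman) with $W = I$: since $A = V|A|$ with $V$ unitary, that theorem yields $\nui{A - V} \le \nui{A - I}$. Unwinding the left-hand side,
\[
\nui{A - V} = \nui{V|A| - V} = \nui{V(|A| - I)} = \nui{|A| - I},
\]
hence $\nui{|FX^*| - I} \le \nui{FX^* - I}$. Combining the two inequalities gives the claimed equality.

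The argument is essentially mechanical once the symmetry $X \mapsto VX$ of $C_\cS$ is spotted; the only (mild) subtlety I foresee is the need to extend the polar-decomposition partial isometry of $FX^*$ to a genuine unitary on $\CC^n$ when $FX^*$ is singular, so that Theorem \ref{teo FanHof} is directly applicable.
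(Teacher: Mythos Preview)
Your proof is correct and follows essentially the same route as the paper: both arguments take the polar decomposition $FX^*=U|FX^*|$ with $U$ unitary, exploit the invariance of $C_\cS$ under $X\mapsto UX$ together with the minimality of $X$ to get one inequality, and invoke Fan--Hoffman (Theorem~\ref{teo FanHof}) with $W=I$ for the reverse. The only cosmetic difference is that the paper packages both directions into a single chain of (in)equalities, whereas you treat them separately.
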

\begin{proof} It is well known that in finite dimensions we can  factorize $FX^*=U\,|FX^*|$, where $U\in M_n(\CC)$ is a unitary matrix. Then
\[
\nui{ FX^*-U}=\nui{ (FX^*U^*-I)\,U}=\nui{FX^*U^*-I}\geq \nui{FX^*-I}\geq \nui{FX^*-U}
\]
where the first inequality follows from the fact that $Y=UX\in C_\cS$ is such that $Y^*=X^*U^*$ (and the minimality of $X$) and the second inequality follows from Theorem \ref{teo FanHof} (and the fact that $I$ is a unitary matrix). Hence \[\nui{FX^*-I}=\nui{FX^*U^*-I}=\nui{ U\,( |FX^*| - I)\,U^* }=\nui{ \  |FX^*| - I \, }\ . \]
\end{proof}

\begin{rem}\label{rem uso del lem}
Let $\F=\{f_i\}_{i=1}^m$ and $\cX=\{x_i\}_{i=1}^m$ be frames for $\CC^n$ with synthesis operators $F$ and $X$, and assume that $\cX$ is an optimal  Parseval quasi-dual frame for some u.i.n. $\nui{\cdot}$ i.e. such that $\alpha_{\nui{\cdot}}(\F)=\nui{FX^*-I}$. If we let $\cS=R(X^*)\inc\CC^m$, $\dim \cS=n$, then Lemma \ref{lem nui} 
implies that 
\[ \nui{\, FX^*-I \, }=\nui{ \ |FX^*| - I \ } \ .\]
 Notice that the singular values of the Hermitian matrix $|FX^*| - I\in M_n(\CC) $ are determined by the eigenvalues of $|FX^*|$. Moreover, $|FX^*|^2=X(F^*F)X^*$ and therefore, the eigenvalues of $|FX^*|^2$ coincide with the first $n$ entries of the vector $\lambda(P_\cS(F^*F)\,P_\cS)$ of eigenvalues of the compression of the Gramian matrix $F^*F$ to the subspace $\cS$. This last fact shows a connection between our problem and the following theorem of Fan-Pall related with eigenvalues of compressions.
\end{rem}

\begin{teo}[\cite{FanPall}]\label{FanPallTeo}
Let $m\geq n$ and $\mu_1\geq \ldots\geq\mu_n$, $\la_1\geq \ldots \geq \la_m$  be $n+m$ real numbers. There exists an Hermitian matrix $H\in M_m(\CC)$ with eigenvalues $(\la_i)_{i=1}^m$
 and a coisometry $U\in M_{n,m}(\CC)$ such that $UHU^*$ has eigenvalues $(\mu_i)_{i=1}^n$ if and only if
\begin{equation}\label{FanPall Ineq} \la_i \geq \mu_i, \qquad \la_{m-n+i}\leq \mu_i, \qquad 1\leq i\leq n.
\end{equation}
\end{teo}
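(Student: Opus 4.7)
The plan is to handle both directions using Cauchy's interlacing theorem for compressions of Hermitian matrices together with its converse.

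For the necessity, given a coisometry $U\in M_{n,m}(\CC)$ with $UU^*=I_n$, I would extend $U$ to a unitary matrix $V\in M_m(\CC)$ whose first $n$ rows coincide with those of $U$. Then $VHV^*$ is unitarily equivalent to $H$, hence carries the same eigenvalue list $(\la_i)_{i=1}^m$, and $UHU^*$ is precisely the leading $n\times n$ principal block of $VHV^*$. Applying Cauchy's interlacing inequalities to this principal block then yields $\la_i\geq \mu_i\geq \la_{m-n+i}$ for $1\leq i\leq n$, which is \eqref{FanPall Ineq}.

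For the sufficiency, I would proceed by induction on the codimension $k=m-n\geq 0$. The base case $k=0$ is immediate: \eqref{FanPall Ineq} forces $\mu_i=\la_i$, and one may take $H=\mathrm{diag}(\la_1,\ldots,\la_n)$, $U=I_n$. For the inductive step, assume the theorem holds with codimension $k-1$. Given $(\la_i)_{i=1}^m$ and $(\mu_j)_{j=1}^n$ satisfying \eqref{FanPall Ineq}, the key is to produce an intermediate non-increasing sequence $\nu_1\geq\cdots\geq \nu_{m-1}$ that interlaces the $\la_i$'s, namely $\la_i\geq \nu_i\geq \la_{i+1}$ for $1\leq i\leq m-1$, while still satisfying the Fan--Pall inequalities against $(\mu_j)$, namely $\nu_j\geq \mu_j\geq \nu_{(m-1)-n+j}$ for $1\leq j\leq n$. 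Granted such $(\nu_i)$, the inductive hypothesis supplies a Hermitian $H'\in M_{m-1}(\CC)$ with eigenvalues $(\nu_i)$ and a coisometry $U'\in M_{n,m-1}(\CC)$ with $U'H'U'^*$ having eigenvalues $(\mu_j)$. The classical converse of Cauchy's interlacing theorem (applied to the interlacing pair $(\la_i),(\nu_i)$) then extends $H'$ to a Hermitian $H\in M_m(\CC)$ with eigenvalues $(\la_i)$ and $H'$ as its leading $(m-1)\times(m-1)$ block. Setting $U:=[\,U'\ \ 0\,]\in M_{n,m}(\CC)$ one checks at once that $UU^*=U'U'^*=I_n$ and $UHU^*=U'H'U'^*$, completing the induction.

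The principal obstacle is the construction of the intermediate sequence $(\nu_i)_{i=1}^{m-1}$: one must place $m-1$ numbers subject to two interlacing constraints simultaneously, and verifying feasibility requires careful bookkeeping near the boundary indices $i=1$, $i=m-n$, and $i=m-1$. A natural strategy is to set $\nu_j=\mu_j$ for those $j\leq n$ where this preserves both monotonicity and the $\la$-interlacing (using that \eqref{FanPall Ineq} provides the chain $\la_j\geq\mu_j\geq \la_{m-n+j}\geq \la_{j+1}$ as room), and to fill the remaining $k-1$ slots with values chosen from the intervals $[\la_{i+1},\la_i]$, perturbed if necessary to keep the resulting sequence non-increasing. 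An alternative avoiding induction would start from the diagonal $H=\mathrm{diag}(\la_1,\ldots,\la_m)$ and build $U$ row by row via Givens rotations that successively adjust the spectrum of the partial compression toward $(\mu_j)$, but the inductive route keeps the argument closest to the classical statement of Cauchy interlacing.
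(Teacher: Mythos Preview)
The paper does not supply a proof of Theorem~\ref{FanPallTeo}: it is quoted from Fan and Pall \cite{FanPall} and used as a black box, so there is no argument in the paper to compare your proposal against.

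On its own merits, your outline is the classical one and is structurally correct. Necessity via unitary extension of the coisometry followed by Cauchy interlacing is exactly right. The inductive scheme for sufficiency---produce an intermediate sequence $(\nu_i)_{i=1}^{m-1}$ interlacing the $\la_i$'s and still satisfying the Fan--Pall inequalities against the $\mu_j$'s, then invoke the rank-one converse of Cauchy interlacing---is precisely how the original proof proceeds. One correction: the parenthetical chain $\la_j\geq\mu_j\geq \la_{m-n+j}\geq \la_{j+1}$ is false in general, since $\la_{m-n+j}\geq \la_{j+1}$ would require $m-n+j\leq j+1$, i.e.\ $m\leq n+1$. You already identify the construction of $(\nu_i)$ as the principal obstacle, and it is; a clean choice that works without case distinctions is
\[
\nu_i=\min\{\la_i,\ \mu_{i-k+1}\}\quad (1\leq i\leq m-1),
\]
with the convention $\mu_t=+\infty$ for $t\leq 0$ and $k=m-n$. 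One checks directly that this sequence is non-increasing, that $\la_i\geq\nu_i\geq\la_{i+1}$ (using $\mu_{i-k+1}\geq\la_{i+1}$ from the hypothesis), and that $\nu_j\geq\mu_j\geq\nu_{k-1+j}$ for $1\leq j\leq n$. With this in hand your induction closes.
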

In particular, given a positive semidefinite matrix $A\in M_m(\CC)$ with eigenvalues $\la_1\geq\ldots \geq \la_m\geq 0$, there exists a rank-$n$ orthogonal projection $P\in M_m(\CC)$ such that the first $n$ eigenvalues of $PAP$ are $\mu_1\geq\ldots\geq \mu_n$ if and only if the inequalities in \eqref{FanPall Ineq} - the so-called Fan-Pall inequalities - hold.

\begin{teo}\label{dim finita2}
Let $\F=\{f_i\}_{i=1}^m$ be a frame for $\CC^n$ with synthesis operator $F$. 
Denote by  $\la=(\la_i)_{i=1}^m$  the eigenvalues of the Gramian $F^*F\in M_m(\CC)$ counting multiplicities and arranged in a non-increasing order. Consider the positive numbers $(d_{j})_{j=1}^n$ defined for $1\leq j\leq n$ 
\begin{equation}\label{optimal singular values}
d_{j} =
\left\{
	\begin{array}{ll}
		\la_{m-n+j}  & \mbox{if } \la_{m-n+j} \geq 1 \ ;\\
		1 & \mbox{if } \la_{m-n+j} < 1 \leq \la_j \ ; \\
		\la_j & \mbox{if } \la_j<1 \ . 
	\end{array}
\right.
\end{equation}
In this case we have that: 
\begin{enumerate}
\item There exists a Parseval frame $\cX=\{x_i\}_{i=1}^m$ for $\CC^n$  with synthesis operator $X$ such that $FX^*$ is positive semidefinite with eigenvalues $(d_j^{1/2})_{j=1}^n$.
\item For every u.i.n. $\nui{\cdot }$ and every Parseval frame $\cY=\{y_i\}_{i=1}^m$ with synthesis operator $Y$
we have that \begin{equation}\label{desi nui}
\nui{ \,FX^*-I\,} \leq \nui{\,FY^*-I\,} \ .
\end{equation}
\item If we let $\Phi:\RR^n\rightarrow \RR_{\geq 0}$ denote the symmetric gauge function associated with $\nui{\cdot}$ then \begin{equation}\label{alfa con nui}
 \alpha_{\nui{\cdot}}(\F)=\Phi((1-d_j^{1/2})_{j=1}^n)\ .
 \end{equation}
\end{enumerate}
\end{teo}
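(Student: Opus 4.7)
The plan is to reduce the problem to a two-stage optimization: first over $n$-dimensional subspaces $\cS\subset \CC^m$, and then within each fiber $C_\cS=\{Y\in M_{n,m}(\CC):Y^*Y=P_\cS\}$. Any Parseval frame corresponds to a coisometry $X:\CC^m\to\CC^n$ with $X^*X=P_\cS$ for $\cS=R(X^*)$. A key observation is that $|FX^*|^2=XF^*FX^*$ is unitarily equivalent to the nonzero part of the compression $P_\cS F^*FP_\cS$, so the spectrum of $|FX^*|$ depends only on $\cS$, not on the choice of coisometry in $C_\cS$. Lemma \ref{lem nui} then gives that the minimum of $\nui{FY^*-I}$ over $Y\in C_\cS$ equals $\nui{|FX^*|-I}$, so that
\[\alpha_{\nui{\cdot}}(\F) \,=\, \min_{\cS} \,\nui{|FX^*|-I},\]
where the right-hand side depends on $\cS$ only through the top $n$ eigenvalues of $P_\cS F^*FP_\cS$.

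To compute this min, let $(\mu_j(\cS))_{j=1}^n$ denote the eigenvalues of $XF^*FX^*$ in non-increasing order. Then $\nui{|FX^*|-I}=\Phi((\sqrt{\mu_j(\cS)}-1)_{j=1}^n)$, where $\Phi$ is the symmetric gauge function of $\nui{\cdot}$. By the compression form of Fan-Pall (Theorem \ref{FanPallTeo}), as $\cS$ varies the achievable tuples $(\mu_j)_{j=1}^n$ are exactly those satisfying $\la_{m-n+j}\leq \mu_j \leq \la_j$ for every $j$. Using the coordinatewise monotonicity of symmetric gauge functions in the absolute values of their arguments, I can minimize each $|\sqrt{\mu_j}-1|$ independently over $[\la_{m-n+j},\la_j]$. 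Since $t\mapsto|\sqrt{t}-1|$ on $\RR_{\geq 0}$ attains its minimum on a closed interval at the point closest to $1$, a direct case analysis against $1$ identifies the optimum $\mu_j=d_j$ as defined in \eqref{optimal singular values}. This yields (3), $\alpha_{\nui{\cdot}}(\F)=\Phi((1-d_j^{1/2})_{j=1}^n)$, and along the way proves the lower bound in (2): any coisometry $Y$ satisfies
\[\nui{FY^*-I}\,\geq\, \nui{|FY^*|-I}\,\geq\, \Phi((1-d_j^{1/2})_{j=1}^n).\]

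For the existence claim in (1), Fan-Pall provides a rank-$n$ orthogonal projection $P_{\cS^*}$ realizing $(d_j)_{j=1}^n$ as the top $n$ eigenvalues of $P_{\cS^*}F^*FP_{\cS^*}$. Any coisometry $X_0$ with $X_0^*X_0=P_{\cS^*}$ then has $|FX_0^*|$ with eigenvalues $(d_j^{1/2})_{j=1}^n$. To upgrade to the stronger conclusion $FX^*\geq 0$, I apply a polar-decomposition twist inside the fiber: write $FX_0^*=W_0|FX_0^*|$ with $W_0\in M_n(\CC)$ unitary, and set $X=W_0X_0$. This remains a coisometry, and $FX^*=FX_0^*W_0^*=W_0|FX_0^*|W_0^*$ is positive semi-definite with the required eigenvalues. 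The main obstacle I anticipate is the joint optimization step: cleanly decoupling the choice of $\cS$ (controlled by Fan-Pall) from the choice of coisometry in the fiber $C_\cS$ (controlled by Lemma \ref{lem nui}), and then threading both through the monotonicity of $\Phi$, is the core of the argument; the positivity adjustment at the end is a neat but essentially cosmetic postprocessing step.
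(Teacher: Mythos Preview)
Your proposal is correct and follows essentially the same approach as the paper's proof: both use Lemma \ref{lem nui} (via Fan--Hoffman) to reduce the fiber minimum to $\nui{\,|FX^*|-I\,}$, invoke Fan--Pall to control the eigenvalues of the compression $P_\cS F^*FP_\cS$, compare $|1-c_j^{1/2}|$ to $|1-d_j^{1/2}|$ coordinatewise, and conclude by the monotonicity of the symmetric gauge function. Your polar ``twist'' $X=W_0X_0$ to obtain $FX^*\geq 0$ is a minor variant of the paper's use of the polar decomposition of $P_\cS F^*$; both are valid since all $d_j>0$ forces the polar factor to be unitary.
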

\begin{proof}
By Theorem \ref{FanPallTeo} it is clear that there exists an $n$-dimensional subspace $\cS\subseteq \CC^m$ such that the first $n$ eigenvalues of the compression $P_\cS (F^*F)P_\cS\in M_m(\CC)$ are given by the sequence of positive numbers $(d_j)_{j=1}^n$ arranged in non-increasing order. Notice that in this case $F P_ \cS F^*=|P_\cS F^*|^2\in M_n(\CC)$ is an invertible operator with eigenvalues $(d_j)_{j=1}^n$ (arranged in non-increasing order).

Consider the polar decomposition $P_\cS F^*=V \,|P_\cS \,F^*|$. Then, it is well known that $V^*P_\cS F^*=|P_\cS F^*|$ and hence $FX^*=|P_\cS F^*|$, where $X=V^*P_\cS\in M_{n,m}(\CC)$ is such that $XX^*=I$. Hence, if we let $\{e_i\}_{i=1}^m$ denote the canonical basis for $\CC^m$ and set $\cX=\{Xe_i\}_{i=1}^m$, then $\cX$ has the desired properties.

Let $\cY=\{y_i\}_{i=1}^m$ be a Parseval frame with synthesis operator $Y$. Notice that Lemma by \ref{lem nui} (see also Remark \ref{rem uso del lem}) we can assume that 
\begin{equation}\label{ecua teo 3.6} \nui{\, FY^*-I \, }=\nui{ \ |FY^*| - I \ } \ , \end{equation} otherwise, we replace $\cY$ with a Parseval frame $\tilde \cY$ with $ R(\tilde Y^*)=R(Y^*)$ and such that 
\[\nui{FY^*-I}\geq \nui{F(\tilde Y)^*-I} = \nui{\ |F(\tilde Y)^*| -I\ }  \ .\]
Since $\cY$ is a Parseval frame then the eigenvalues $(c_j)_{j=1}^n$ of $|F Y^*|\in M_n(\CC)$ correspond to the square roots of the first $n$ eigenvalues of the compression $P_{R(Y^*)} (F^*F) P_{R(Y^*)}\in M_m(\CC)$, for which the Fan-Pall inequalities in Theorem \ref{FanPallTeo} apply.
 In this case it is straightforward to check that 
 \[ |1-d_j^{1/2}|\leq |1-c_j^{1/2}| \ , \ \ 1\leq j\leq n \ .\]
The singular values  $s(FX^*-I)\in \RR^n$ (respectively $s(|FY^*|-I)\in\RR^n$) are $(|1-d_j^{1/2}|)_{j=1}^n$ arranged in non-increasing order (respectively, $(|1-c_j^{1/2}|)_{j=1}^n$ arranged in non-increasing order). Therefore, since  $\Phi$ is monotone - because it is gauge invariant (see \cite{Bhatia})- we obtain 
 equations \eqref{desi nui} and \eqref{alfa con nui}.
\end{proof}

\begin{cor}\label{cor dim fin}
With the notations of Theorem \ref{dim finita2} 
\[
\mathfrak{X}_{\nui{\cdot}}(\F):=\{\cX=\{x_i\}_{i=1}^m: \, \Phi((s_j(FX^*-I))_{j=1}^n)= \Phi((s_j(FX^*)-1)_{j=1}^n)=
\Phi((1-d_j^{1/2})_{j=1}^n)\}
\ .
\]
 Notice further that the singular values $(s_j(FX^*))_{j=1}^n$ correspond to the square roots of the first $n$ entries of the vector $\lambda(P_\cS(F^*F)P_\cS)\in\RR^m$, where $\cS=R(X^*)$.
\end{cor}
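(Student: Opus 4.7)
The plan is to unpack the double equality defining $\mathfrak{X}_{\nui{\cdot}}(\F)$ in the corollary and show it is equivalent to the original definition as the set of Parseval frames $\cX$ achieving $\nui{FX^*-I}=\alpha_{\nui{\cdot}}(\F)$. The approach reduces to translating each of the three $\Phi$-expressions into a unitarily invariant norm, and then invoking Lemma \ref{lem nui} and Theorem \ref{dim finita2}. The Parseval requirement $XX^*=I$ is inherited from the symbol $\mathfrak{X}_{\nui{\cdot}}(\F)$ and must be appended to the right-hand side set.

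First I would rewrite each expression. Since $\nui{A}=\Phi(s(A))$ for every $A\in M_n(\CC)$, the leftmost term equals $\nui{FX^*-I}$. For the middle term, note that $|FX^*|$ is positive semidefinite with eigenvalues $(s_j(FX^*))_{j=1}^n$, so $|FX^*|-I$ is Hermitian with eigenvalues $(s_j(FX^*)-1)_{j=1}^n$ and singular values $(|s_j(FX^*)-1|)_{j=1}^n$; since $\Phi$ is a symmetric gauge function (invariant under sign changes and permutations), $\Phi((s_j(FX^*)-1)_{j=1}^n)=\nui{\,|FX^*|-I\,}$. Finally, by Eq.\ \eqref{alfa con nui}, the rightmost expression equals $\alpha_{\nui{\cdot}}(\F)$. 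Thus the double equality of the corollary reads $\nui{FX^*-I}=\nui{\,|FX^*|-I\,}=\alpha_{\nui{\cdot}}(\F)$. For the forward inclusion, if $\cX$ is a Parseval quasi-dual then $\nui{FX^*-I}=\alpha_{\nui{\cdot}}(\F)$ by definition, and applying Lemma \ref{lem nui} with $\cS=R(X^*)$ (a subspace of dimension $n$ since $XX^*=I$) yields the remaining equality $\nui{FX^*-I}=\nui{\,|FX^*|-I\,}$. The reverse inclusion is immediate: the double equality directly forces $\nui{FX^*-I}=\alpha_{\nui{\cdot}}(\F)$, so any Parseval $\cX$ satisfying it lies in $\mathfrak{X}_{\nui{\cdot}}(\F)$.

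For the closing identification of $(s_j(FX^*))_{j=1}^n$ with the square roots of the first $n$ entries of $\lambda(P_\cS F^*F P_\cS)$, I would compute directly: since $X^*X=P_\cS$, we have $(FX^*)(FX^*)^*=FX^*XF^*=FP_\cS F^*$, and the nonzero eigenvalues of $FP_\cS F^*=(FP_\cS)(P_\cS F^*)$ coincide with those of $(P_\cS F^*)(FP_\cS)=P_\cS F^*F P_\cS$. Because $\dim \cS=n$ and $F$ is surjective, the operator $P_\cS F^*F P_\cS\in M_m(\CC)$ has rank at most $n$, so its first $n$ eigenvalues account for all nonzero ones; taking square roots yields the singular values of $FX^*$.

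I do not expect any serious obstacle; the corollary is essentially a bookkeeping restatement of results already proved. The only point requiring care is to keep straight the distinction between singular values of the (non-Hermitian) matrix $FX^*-I$ and eigenvalues of the (Hermitian) matrix $|FX^*|-I$, and to exploit that $\Phi$ collapses this distinction precisely when $\cX$ is optimal, which is exactly the equality case of Fan--Hoffman exploited in Lemma \ref{lem nui}.
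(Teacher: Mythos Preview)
Your proposal is correct and matches the paper's approach: the corollary is stated there without proof, since it is an immediate consequence of Theorem \ref{dim finita2} together with Remark \ref{rem uso del lem}, and your argument simply makes those implicit steps explicit. In particular, the key observation that a global minimizer $\cX$ over all Parseval frames is \emph{a fortiori} a minimizer over $C_{R(X^*)}$, so that Lemma \ref{lem nui} applies, is exactly what Remark \ref{rem uso del lem} records; and your computation relating $s_j(FX^*)$ to the eigenvalues of $P_\cS(F^*F)P_\cS$ via the $AB$--$BA$ spectrum identity is equivalent to the paper's route through $|FX^*|^2=X(F^*F)X^*$.
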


\begin{exa}
Let $1\leq p\leq \infty$ and consider the norm $\|\cdot\|_p$ on $M_n(\CC)$ i.e., if $T\in M_n(\CC)$ then 
\[\|T\|_p=\left (\sum_{i=1}^n s_i(T)^p\right)^{1/p} \ 1\leq p<\infty \quad \text{ and } \quad \|T\|_\infty=\|T\|\, . \] 
Then $\|\cdot\|_p$ is a unitarily invariant norm on $M_n(\CC)$. Let $\F=\{f_i\}_{i=1}^m$ be a frame for $\CC^n$ and let $(d_j)_{j=1}^n$ be the sequence defined in Theorem \ref{dim finita2}. If we denote by $\alpha_p(\F)= \alpha_{\|\cdot\|_p}(\F)$ then by Corollary \ref{cor dim fin} we get that
\[
\alpha_p(\F) =
\left\{
	\begin{array}{ll}

\left (\sum_{i=1}^n |1-d_j^{1/2}|^p\right)^{1/p}		 & \mbox{ if }  1 \leq p <\infty \ ; \\ \\ 
 \max\left\{1-\la_n\rai, \la_{m-n+1}\rai-1,0\right\} & \mbox{ if } p =\infty  \ . 
	\end{array}
\right.
\]
\end{exa}

\begin{rem}
Define  $1\leq r\leq n$ as the biggest integer such that $\la_r\geq 1$, if $\la_1<1$ we set $r=0$. A closer look at the definition of the optimal set of singular values $(d_j)_{j=1}^n$ given in \eqref{optimal singular values} shows that
%
%
  the nonzero values of $(|1-d_j\rai|)_{j=1}^n$ are the nonzero values of  
\begin{equation}\label{los dj revisados}
\left\{
	\begin{array}{ll}
		(1-\la_j\rai)_{j\geq r+1} & \mbox{if } r\leq m-n+1\ \mbox{ and } r<n;\\
	(|\la_j\rai -1|)_{j\geq m-n+1}& \mbox{if } r>m-n+1,\,  \\
	\end{array}
\right.
\end{equation}
If $r=n$ and $m-n+1>n$, we have that $d_j=1$ for $1\leq j\leq n$.
\end{rem}

\section{Parseval quasi-dual frames: infinite frames}\label{sec 4}

In this section we shall focus on infinite frames $\F:=\{f_i\}_{i\in \NN}$ for a separable and infinite dimensional Hilbert space $\cH$.
In this case $F$ will denote the synthesis operator of $\F$. In what follows we denote  
\[ \alpha(\F):= \inf \{ \|FX^*-I\|:\ XX^*=I_\cH \}\, , \] where $\|\cdot\|$ is the operator norm on $B(\cH)$. As explained before, $\alpha(\cF)$ is the optimal bound for the worst case reconstruction error in terms of a blind reconstruction algorithm based on Parseval frames. 
Moreover, in case that  $\alpha(\F)$ is attained, we are interested in the set of Parseval quasi-dual frames:
\[\mathfrak{X}(\F):=\{X: \, XX^*=I \ , \  \alpha(\F)=\|FX^*-I\| \} \, .\]

\subsection{Frames with infinite excess}

It is clear from Theorem \ref{hay dual} that $\alpha(\F)=0$ if and only if $A_\F\geq 1$ and $\overline{ R(S_\cF-I)}\leq \dim N(F)$. In particular, for frames with infinite excess, $\alpha(\F)=0$ if and only if $A_\F\geq 1$.
The following result provides upper and lower bounds for $\alpha(\F)$ if $\F$ does not have a Parseval dual, i.e. $\alpha(\F)>0$.

\begin{teo}\label{cota inferior}
Let $\F=\{f_i\}_{i\in \NN}$ be a frame in $\cH$ with optimal lower  frame bound $A_\F$ and synthesis operator $F$.
Assume that at least one of the following conditions holds:
\begin{enumerate}
\item  $A_\F<1$ ;
\item  $A_\F>1$ and $\dim N(F)<\dim \cH$ .
\end{enumerate}
 Then 
 \[ |1-A_\F\rai|\leq \alpha(\F)\leq \|S_\F\rai-I\|=\max\{1-A_\F\rai,\, B_\F\rai -1\}.\]
 \end{teo}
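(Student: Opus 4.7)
The plan is to prove the two inequalities separately. The upper bound comes from an explicit construction (the canonical Parseval frame associated with $\F$), while the lower bound requires different arguments in the two cases of the hypothesis.

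For the upper bound $\alpha(\F)\le \|S_\F\rai-I\|$, I take the synthesis operator $X_0 := S_\F\mrai F$. This is bounded since $S_\F\mrai\in B(\cH)$ (because $A_\F>0$), and
\[ X_0 X_0^* = S_\F\mrai F F^* S_\F\mrai = S_\F\mrai S_\F S_\F\mrai = I_\cH, \]
so $X_0$ is a coisometry. A direct calculation gives $F X_0^* = F F^* S_\F\mrai = S_\F\rai$, whence $\|F X_0^*-I\|=\|S_\F\rai-I\|$. By the spectral theorem for the positive self-adjoint $S_\F$, whose spectrum lies in $[A_\F,B_\F]$ and contains both endpoints, this norm equals $\max\{|1-A_\F\rai|,|B_\F\rai-1|\}$, which is the stated upper bound.

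For the lower bound in case~(1), $A_\F<1$, I use the numerical-range estimate $\|FX^*-I\|\ge |1-\langle FX^*u,u\rangle|$ for any unit $u\in\cH$. Because $A_\F=\inf\sigma(S_\F)$, there is a sequence of unit vectors $(u_n)$ in $\cH$ with $\langle S_\F u_n,u_n\rangle\to A_\F$, and hence $\|F^*u_n\|^2=\langle S_\F u_n,u_n\rangle\to A_\F$. Since $X^*$ is an isometry, Cauchy--Schwarz gives
\[ |\langle FX^*u_n,u_n\rangle|=|\langle X^*u_n,F^*u_n\rangle|\le \|X^*u_n\|\,\|F^*u_n\|=\|F^*u_n\|, \]
so $|1-\langle FX^*u_n,u_n\rangle|\ge 1-\|F^*u_n\|\to 1-A_\F\rai$; letting $n\to\infty$ gives $\|FX^*-I\|\ge 1-A_\F\rai$, independently of any assumption on the excess.

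For the lower bound in case~(2), $A_\F>1$ and $\dim N(F)<\dim\cH$, I instead aim to show $\|FX^*\|\ge A_\F\rai$, whence $\|FX^*-I\|\ge \|FX^*\|-1\ge A_\F\rai-1=|1-A_\F\rai|$ by the triangle inequality. Writing $G=F^*F$, one has $\|FX^*\|^2=\|XGX^*\|=\sup\{\langle Gv,v\rangle: v\in R(X^*),\ \|v\|=1\}$, so the task reduces to producing a unit vector $v\in R(X^*)\cap N(F)^\perp$. Since $\cH$ is separable and infinite-dimensional, the hypothesis $\dim N(F)<\dim\cH$ forces $\dim N(F)$ to be finite, so the orthogonal projection $P_{N(F)}$ restricted to the infinite-dimensional subspace $R(X^*)$ has a nontrivial (indeed infinite-dimensional) kernel, and any unit $v$ there satisfies $\langle Gv,v\rangle\ge A_\F$ because $G|_{N(F)^\perp}\ge A_\F\, I$ (its spectrum there coincides with $\sigma(S_\F)$). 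The main subtle step of the proof is this dimension-counting in case~(2); once one recognizes that the hypothesis amounts to $\F$ having finite excess, the estimate follows at once.
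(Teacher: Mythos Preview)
Your proof is correct. The upper bound is handled identically to the paper: your $X_0 = S_\F\mrai F$ is precisely the coisometry $W$ from the right polar decomposition $F = S_\F\rai W$, and the paper computes $\|FW^*-I\|=\|S_\F\rai-I\|$ just as you do. For case~(2) both arguments rest on the same idea --- finding unit vectors in $R(X^*)$ (essentially) orthogonal to $N(F)$ --- though your direct dimension count, producing an exact unit vector in $R(X^*)\cap N(F)^\perp$ because $N(F)$ is finite-dimensional, is slightly cleaner than the paper's approximating sequence $\{z_n\}$ with $\|P_{N(F)}z_n\|<1/n$.

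The genuine difference is in case~(1). The paper argues by contradiction: if some coisometry $Y$ satisfied $\|FY^*-I\|\leq r<1-A_\F\rai$, then $FY^*$ would be invertible with inverse $G$ of norm at most $(1-r)^{-1}<A_\F\mrai$; since $(GF)Y^*=I$, the frame $\{Gf_i\}$ admits the Parseval dual $Y$, so by Theorem~\ref{hay dual} its lower frame bound $\hat A$ is at least~$1$, contradicting $\hat A\leq \|G\|^2 A_\F<1$. Your route is more elementary and entirely self-contained: you bound the numerical range via Cauchy--Schwarz and the isometry of $X^*$, avoiding both the invertibility step and any appeal to the Parseval-dual characterization. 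The paper's approach is more conceptual, linking the lower bound directly to the obstruction in Theorem~\ref{hay dual}; yours is shorter and needs no external input.
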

\begin{proof}
Let $F=S_\F\rai W$ be the (right) polar decomposition of $F$; then  $WW^*=I$ and $W^*W=P_{N(F)^\perp}$. Hence, 
\[
\alpha(\F)\leq \|FW^*-I\|=\|S_\F\rai-I\|.
\]
Suppose that $A_\F<1$ and that
there exists a coisometry $Y$ such that
\[\|FY^*-I\|\leq r <1-A_\F\rai. \]
Then, since $r<1$, $FY^*$ is invertible in $B(\cH)$ and its inverse $G=(FY^*)^{-1}$ satisfies  $\|G\|\leq \frac{1}{1-r}<A_\F\mrai$. 
Since $(GF)Y^*=I$, the  optimal lower frame bound $\hat{A}$ of $\{Gf_i\}_{i\in \NN}$ is $\hat{A}\geq 1$, by Theorem \ref{hay dual}.
Now, by Eq. \eqref{cotas optimas} we have that $\hat{A}\leq \|G\|^2 A_\F$ so then $1\leq \hat{A}\leq \|G\|^2 A_\F<1$,
which is a contradiction.

Now consider the case $A_\F>1$ and $\dim N(F)<\dim \cH$. As in the previous case, suppose that there exists  a coisometry $X$ such that
$\|FX^*-I\|<A_\F\rai-1$. 
\[
\|FX^*\|-1\leq \|FX^*-I\|<A_\F\rai -1 \ \ \Rightarrow \ \ \|FX^*\|<A_\F\rai \ .
\]
We claim that there exists a sequence $\{z_n\}_{n\in\NN}$ of unit norm vectors in $R(X^*)$ such that 
$\|P_{N(F)}z_n\|<\frac{1}{n}$. Indeed, let $P=P_{N(F)}$ and assume that there exists $\delta>0$ such that $\|Px\|\geq \delta \, \|x\|$ for every $x\in R(X^*)$. Then, $P|_{R(X^*)}:R(X^*)\rightarrow N(F)$ is an injective and closed range operator; hence $\dim N(F)\geq \dim R(X^*)=\dim(\cH)$ which is a contradiction,  so the claim is proved.

Consider now $x_n=Xz_n$, so that $\|x_n\|=1$ and $X^*x_n=z_n$ for $n\in\NN$, where $\{z_n\}_{n\in\NN}$ is as above. Then, by Equation \eqref{cotas optimas}  
 we get that 
\[A_\F\rai \left(1-\frac{1}{n}\right)\leq A_\F\rai\|(I-P)z_n\|\leq \|F(I-P)z_n\|=\|F z_n\|=\|FX^*x_n\|\leq \|FX^*\| <A_\F\rai \]
for $n\in\NN$, which implies that $A_\F\rai \leq \|FX^*\| <A_\F\rai$. Hence $\alpha(\F)\geq A_\F^{1/2}-1$ in this case.
\end{proof}

\begin{pro}\label{se alcanza}
Let $\F=\{f_i\}_{i\in \NN}$ be a frame in $\cH$ with synthesis operator $F$ and  optimal frame bounds $0<A_\F\leq B_\F $ such that $A_\F <1$.
Assume that some of the following conditions holds: 
\begin{enumerate}
\item $\dim \overline{R(S_\F-A_\F\, I)}\leq \dim N(F)$ ;
\item $A_\F\rai+B_\F\rai\leq 2$ .
\end{enumerate} 
Then $ \alpha(\F)= 1-A_\F\rai$ and in each case the infimum is attained.
\end{pro}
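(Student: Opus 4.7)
The plan is to combine the lower bound from Theorem \ref{cota inferior} with matching upper bound constructions tailored to each hypothesis. Since $A_\F<1$, Theorem \ref{cota inferior} already yields $\alpha(\F)\geq 1-A_\F\rai$, so in each case it suffices to exhibit a coisometry $X$ with $\|FX^*-I\|\leq 1-A_\F\rai$; such an $X$ simultaneously verifies the value of $\alpha(\F)$ and witnesses that the infimum is attained.

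For hypothesis (2) I would use the polar decomposition that is implicit in Theorem \ref{cota inferior}. Writing $F=S_\F\rai W$ with $W$ a coisometry (this is valid since $F$ is surjective, because $S_\F=FF^*$ is invertible), one gets $FW^*=S_\F\rai$ and therefore
\[
\|FW^*-I\|=\|S_\F\rai-I\|=\max\{1-A_\F\rai,\,B_\F\rai-1\},
\]
using that $\mathrm{sp}(S_\F\rai)\subseteq[A_\F\rai,B_\F\rai]$. The assumption $A_\F\rai+B_\F\rai\leq 2$ is tailored so that $B_\F\rai-1\leq 1-A_\F\rai$; hence $\|FW^*-I\|=1-A_\F\rai$ and $X=W$ attains the infimum.

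For hypothesis (1) the idea is to rescale $\F$ so that Theorem \ref{hay dual} applies directly. Set $\tilde\F=A_\F\mrai\F=\{A_\F\mrai f_i\}_{i\in\NN}$; its synthesis operator is $\tilde F=A_\F\mrai F$, its frame operator is $A_\F^{-1}S_\F$, its optimal lower bound equals $1$, and $N(\tilde F)=N(F)$. The dimensional condition of Theorem \ref{hay dual} for $\tilde\F$ reads
\[
\dim\overline{R(A_\F^{-1}S_\F-I)}=\dim\overline{R(S_\F-A_\F\,I)}\leq\dim N(F),
\]
which is exactly hypothesis (1). Consequently $\tilde\F$ admits a Parseval dual frame $\cX$ with synthesis operator $X$, i.e.\ $\tilde F X^*=I_\cH$, equivalently $FX^*=A_\F\rai I_\cH$. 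Then $\|FX^*-I\|=|A_\F\rai-1|=1-A_\F\rai$, so $X$ realizes the infimum.

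I do not anticipate a genuine obstacle: both upper bound constructions are one-line applications of previously cited results. The only careful point is the bookkeeping in Case (1), namely verifying that the scaling by $A_\F\mrai$ preserves the nullspace of the synthesis operator and transforms $\overline{R(S_\F-A_\F\,I)}$ into $\overline{R(S_{\tilde\F}-I)}$ without changing its dimension, so that Theorem \ref{hay dual} is applicable to $\tilde\F$.
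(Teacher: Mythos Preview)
Your proof is correct and follows essentially the same approach as the paper: the lower bound comes from Theorem \ref{cota inferior}, Case (1) is handled by rescaling $\F$ by $A_\F\mrai$ and applying Theorem \ref{hay dual} to obtain a coisometry $X$ with $FX^*=A_\F\rai I$, and Case (2) uses the polar coisometry $W$ to get $\|FW^*-I\|=\|S_\F\rai-I\|=1-A_\F\rai$. The paper's proof is identical in structure, only phrasing Case (2) as an immediate consequence of the upper bound already established in Theorem \ref{cota inferior} rather than re-deriving the polar decomposition.
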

\begin{proof}
Notice that by assumption $A_\F<1$ and hence, by Theorem \ref{cota inferior}, we get that $\alpha(\F)\geq 1-A_\F\rai$.

Suppose  that $\dim \overline{R(S_\F-A_\F\, I)}\leq \dim N(F)$.
Let $T$ be the synthesis operator of $\{A_\F\mrai \,f_i\}_{i\in \NN}$, i.e. , $T=A_\F\mrai F$.
Notice that $TT^*=A_\F^{-1}S_\F\geq I$. 
Moreover, 
\[\dim \overline{ R(TT^*-I)}=\dim\overline{R(S_\F-A_\F\,I)}\leq \dim N(F)=\dim N(T).\]

Then, by Theorem \ref{hay dual} there exist a coisometry $Y$ such that
$TY^*=A_\F\mrai FY^*=I$ so then \[ \|FY^*-I\|=\|A_\F\rai \, I - I\|= 1-A_\F\rai \quad \Rightarrow \quad \alpha(\F)= 1-A_\F\rai \ . \] 
Now, suppose that $A_\F\rai+B_\F\rai\leq 2$, then $\|S_\F\rai-I\|=\max\{1-A_\F\rai,\, B_\F\rai -1\}= 1-A_\F\rai $ and the result follows by Theorem \ref{cota inferior}.
\end{proof}
The previous results show that the following conclusions hold in case the frame $\F$ satisfies that $\dim N(F)=\infty$.
\begin{teo}\label{resu prim part}
Let $\F$ be a frame for $\cH$ with   optimal lower frame bound $A_\F>0$ and synthesis operator $F$. If $\dim N(F)=\infty$, then:
\begin{enumerate}
\item $\alpha(F)=1-\min\{A_\F\rai, 1\}$.
\item The optimal lower bound $\alpha(F)$ is attained.
\item If $X\in\mathfrak{X}(\F)$ then $FX^*$ is an invertible operator in $B(\cH)$.
\item Moreover, we can choose $X\in\mathfrak{X}(\F)$ such that $FX^*=\min\{A_\F\rai, 1\}\, I$. 
\end{enumerate}
\end{teo}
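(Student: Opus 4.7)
The plan is to split into the two cases $A_\F\geq 1$ and $A_\F<1$ and invoke the previously established results. The hypothesis $\dim N(F)=\infty$ is exploited crucially to make the dimension-counting conditions in Theorem \ref{hay dual} and Proposition \ref{se alcanza} automatic: since $\cH$ is separable, any closed subspace of $\cH$ has dimension at most $\aleph_0$, so $\dim \overline{R(S_\F - c\,I)}\leq \dim N(F)$ holds for every $c\in \RR$.

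If $A_\F\geq 1$, then item \ref{condiciones} of Theorem \ref{hay dual} is satisfied, so there exists a Parseval frame $\cX$ with synthesis operator $X$ such that $FX^*=I_\cH$. In particular $\alpha(\F)=0=1-\min\{A_\F\rai,1\}$, the infimum is attained, $FX^*=I$ is invertible, and $FX^*=1\cdot I=\min\{A_\F\rai,1\}\,I$. This disposes of all four claims simultaneously in this case.

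If $A_\F<1$, the lower bound $\alpha(\F)\geq 1-A_\F\rai$ is already given by Theorem \ref{cota inferior}(1). For the matching upper bound, I would apply Proposition \ref{se alcanza}(1), whose hypothesis $\dim \overline{R(S_\F-A_\F\,I)}\leq \dim N(F)$ holds since $\dim N(F)=\infty$. Tracing through the proof of that proposition, one sees that the coisometry $Y$ produced by Theorem \ref{hay dual} applied to the rescaled frame $\{A_\F\mrai f_i\}_{i\in\NN}$ satisfies $A_\F\mrai FY^*=I_\cH$, so that $FY^*=A_\F\rai\,I$; thus $Y\in\mathfrak{X}(\F)$ with $FY^*=\min\{A_\F\rai,1\}\,I$, establishing items 1, 2 and 4.

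It remains to prove item 3 in the case $A_\F<1$. Given any $X\in\mathfrak{X}(\F)$, one has $\|FX^*-I\|=\alpha(\F)=1-A_\F\rai<1$ (since the frame bound $A_\F$ is strictly positive); therefore $FX^*=I-(I-FX^*)$ is invertible in $B(\cH)$ by the Neumann series, finishing the proof. The only subtle step is checking the dimension hypothesis of Proposition \ref{se alcanza}, which is a routine separability argument; the rest reduces to bookkeeping between Theorem \ref{cota inferior}, Proposition \ref{se alcanza} and Theorem \ref{hay dual}.
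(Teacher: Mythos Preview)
Your proposal is correct and follows essentially the same route as the paper's own proof: both split on $A_\F\geq 1$ versus $A_\F<1$, invoke Theorem \ref{hay dual} in the first case and Proposition \ref{se alcanza} (together with Theorem \ref{cota inferior}) in the second, and obtain item 3 from $\|FX^*-I\|=\alpha(\F)<1$. One tiny wording point: in the case $A_\F\geq 1$, item 3 is a statement about \emph{every} $X\in\mathfrak{X}(\F)$, not just the one you constructed; but since $\alpha(\F)=0$ forces $FX^*=I$ for any such $X$, this is immediate.
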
 
\begin{proof}
If $A_\F\geq 1$, all items are consequences of the existence of a Parseval dual for $\F$, since the conditions in item $\ref{condiciones}$ of Theorem \ref{hay dual} are clearly satisfied in this case.

If $A_\F<1$, then, by Proposition \ref{se alcanza} $\alpha(\F)=1-A_\F\rai$ and $\mathfrak{X}(\F)\neq \emptyset$. Moreover, since $\|FX^*-I\|=\alpha(\F)<1$ for every $\cX\in\mathfrak{X}(\F)$, we deduce that $FX^*$ is invertible in $B(\cH)$. Finally, in the proof of Proposition \ref{se alcanza} we see that there exists $X\in \mathfrak{X}(\F)$ such that  $FX^*=A_\F\rai I$.
\end{proof}

\begin{rem}
Let $\F$ be a frame for $\cH$ with  optimal lower frame bound $A_\F$ and synthesis operator $F$. Assume further that $\dim N(F)=\infty$. Then, according to Theorem \ref{resu prim part} there exists a Parseval frame $\cX=\{x_j\}_{j\in\NN}$ for $\cH$ such that $\|FX^*-I\|=\alpha(\F)$ and $FX^*= (1-\alpha(\F))\,I$. As a consequence of the previous facts we can obtain a perfect reconstruction algorithm derived from the pair $(\F,\cX)$, namely 
\[f=\beta(\F) \sum_{j\in\NN} \pint{f,f_j}\ x_j = \beta(\F) \sum_{j\in\NN} \pint{f,f_j}\ x_j \ , \ \ f\in \cH\ , \]
 where $\beta(\F)=(1-\alpha(\F))^{-1}=\max\{A_\cF^{-1/2},1\}$.  That is, in this case the tight frame $\beta(\F)\cdot \cX=\{\beta(\F) \, x_j\}_{j\in\NN}$ is a dual frame for $\F$ with frame bound $\beta(\F)$. The properties of $\alpha(\F)$ ensure that $\beta(\cF)\geq 1$ is the smallest constant for which there exists a tight frame $\cY$ for $\cH$ with frame bound $\beta$ and which is a dual frame for $\F$. 
\end{rem}
\subsection{Frames with finite excess.}\label{sec fra fin}
We are left to compute the value of $\alpha(\F)$ in case that $\cF$ is a frame for $\cH$ with $\dim (N(F))<\infty$. In order to do that, we shall relate this problem with the computation of the distance of a bounded operator to the group of unitary operators acting on a suitable closed subspace of $\ell_2=\ell_2(\NN)$. 

Indeed, let $\cM$ be an infinite dimensional closed subspace of $\ell_2$ and consider a coisometry $Y\in B(\ell_2,\cH)$ such that $Y^*Y=P_\cM$. Then
\begin{equation}\label{una ident con x}
\|FY^*-I\|=\|(FP_\cM - Y) Y^*\|=\|FP_\cM-Y\|\ .
\end{equation} Consider a fixed $Y$ as above and let $X$ be any other coisometry such that $X^*X=P_\cM$. Then 
\[\|FX^*-I\|=\|FP_\cM - X\|=\|Y^*(FP_\cM-X)\|=\|Y^*FP_\cM - Y^*X\|.\]
Notice that $\cM$ is an invariant subspace for both $Y^*F$ and $Y^*X$; moreover, if we consider the restrictions of these operators to $\cM$ we get
\[ \|Y^*FP_\cM - Y^*X\|= \|(Y^*FP_\cM - Y^*X )|_{\cM}\|=\|(Y^*F)|_{\cM} - (Y^*X)|_{\cM}\|\]
where $(Y^*F)|_{\cM}\in B(\cM)$ is fixed and $(Y^*X)|_{\cM}\in \cU(\cM)$ is a unitary operator acting on $\cM$. From the previous remarks we see that 
\begin{equation}\label{ec motivacion}
\inf \{\|FX^*-I\|\,:\; X^*X=P_{\cM},\; XX^*=I_\cH\}= d_{\cU(\cM)}((Y^*F)|_\cM).
 \end{equation}
 where $d_{\cU(\cM)}((Y^*F)|_{\cM})$ stands for the distance between the unitary group $\cU(\cM)$ of $B(\cM)$ and the operator $(Y^*F)|_{\cM}$.

These remarks show that there is a connection with our problem and the problem of computing the distance between an operator and the group of unitary operators in a Hilbert space. Since this is a rather technical matter, the proofs of the results related with this topic - which are based on \cite{Rogers} - will be presented in an Appendix.

In what follows we introduce some notions in terms of which we can give a complete description of the solution of our problem. These notions will play an important role in the proofs of the results developed in the Appendix.

%
%
%
%
%

Given an arbitrary operator $T\in B(\cH)$, we denote by $m(T)= \inf\{\|Tx\|, \quad \|x\|=1\}=\min \sigma(|T|)$. In addition,  if $\sigma_e(T)$ denotes the essential spectrum of $T$, let $m_e(T)=\min \sigma_e(|T|)$ and $\|T\|_e= \max \sigma_e(|T|)$. 
Moreover, for $n\in\NN$ let
\[u_n(T)=\sup \{\min \sigma(|T|_\cM)\; : \; \dim(\cM)=n\} \]
In this way we obtain the non-increasing sequence of non-negative real numbers $(u_n(T))_{n\in\NN}$. 

\begin{teo}\label{thm inf excess}
Let $\F=\{f_i\}_{i\in \NN}$ be a frame for $\cH$ with synthesis operator $F$ and optimal lower frame bound $A_\cF$. Suppose that $\dim(N(F))=n$ and let $C_\F=u_{n+1}(F)$. Then,
\[\alpha(\F)= \min\{\max\{C_\F-1, 1-A_\F\rai\},\, 1+m_e(F)\}\]
\end{teo}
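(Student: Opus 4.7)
The plan is to reduce the computation of $\alpha(\F)$ to a distance-to-unitaries problem by means of \eqref{ec motivacion}. Any coisometry $X\colon\ell_2\to\cH$ with $XX^*=I$ is encoded by the pair $(\cM,X)$, where $\cM:=R(X^*)\subseteq\ell_2$ is a closed infinite-dimensional subspace and, upon fixing any coisometry $Y_\cM$ with $Y_\cM^*Y_\cM=P_\cM$, the operator $(Y_\cM^*X)|_\cM$ ranges over all of $\cU(\cM)$. Hence
\[
\alpha(\F)\;=\;\inf_{\cM}\,d_{\cU(\cM)}(T_\cM),\qquad T_\cM:=(Y_\cM^*F)|_\cM\in B(\cM),
\]
the infimum being taken over all closed infinite-dimensional subspaces $\cM\subseteq\ell_2$. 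A direct computation yields $|T_\cM|^2=P_\cM F^*F P_\cM|_\cM$, so the spectral data of $T_\cM$ coincides with that of the compression of the Gramian $F^*F$ to $\cM$.

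Next, I would invoke a Rogers-type formula for $d_{\cU(\cK)}(T)$ where $\cK$ is a separable infinite-dimensional Hilbert space, the proof of which is deferred to the Appendix. For $T\in B(\cK)$ with polar decomposition $T=V|T|$, two distinct obstructions arise. A \emph{spectral} obstruction yields $d_{\cU(\cK)}(T)\geq\max\{\|T\|-1,\,1-m(T)\}$, attained whenever $V$ is already unitary. A \emph{Fredholm-index} obstruction yields $d_{\cU(\cK)}(T)\geq 1+m_e(T)$, unavoidable whenever $V$ has nonzero index, since no unitary can correct the index defect on the essential part of $|T|$. The best achievable distance for a fixed $T$ is then the minimum of these two candidate bounds.

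Finally, one optimizes $d_{\cU(\cM)}(T_\cM)$ over $\cM$. By the variational characterization of $u_k$ together with the standard $s$-number inequalities for compressions, $u_k(T_\cM)\leq u_k(F)$ with equality attainable, and the Fredholm index of the polar partial isometry of $T_\cM$ may be steered by choosing how $N(F)$ sits relative to $\cM$. In the index-zero regime, an explicit choice of $\cM$ forces $\sigma(|T_\cM|)\subseteq[A_\F\rai,C_\F]$ and yields $d_{\cU(\cM)}(T_\cM)=\max\{C_\F-1,\,1-A_\F\rai\}$; in the nontrivial-index regime, an explicit choice yields $d_{\cU(\cM)}(T_\cM)=1+m_e(F)$. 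Taking the minimum over these two regimes produces the claimed formula.

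The principal obstacle is twofold: proving the Rogers-type dichotomy in the precise infinite-dimensional form needed (the technical content of the Appendix), and verifying that in each regime the infimum over $\cM$ is actually attained by a concrete construction. The delicate interplay between the finite-dimensional kernel $N(F)$ of dimension $n$, the essential spectrum of $F^*F$, and the Fredholm index of the polar of $T_\cM$ is the technical heart of the argument, and is precisely the reason the detailed computations are isolated in the separate appendix.
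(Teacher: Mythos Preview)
Your overall strategy coincides with the paper's: reduce via \eqref{ec motivacion} to $\alpha(\F)=\inf_{\cM} d_{\cU(\cM)}(T_\cM)$ with $T_\cM=(Y_\cM^*F)|_\cM$, invoke Rogers' formula, and then optimize over $\cM$ by splitting according to the Fredholm index of $T_\cM$. Two points, however, need sharpening.

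First, your statement of Rogers' theorem is incorrect. For a \emph{fixed} $T\in B(\cK)$ the distance $d_{\cU(\cK)}(T)$ is \emph{not} the minimum of the two candidate bounds; it is a dichotomy: $d_{\cU(\cK)}(T)=\max\{\|T\|-1,\,1-m(T)\}$ when $\ind(T)=0$, and $d_{\cU(\cK)}(T)=\max\{\|T\|-1,\,1+m_e(T)\}$ when $\ind(T)\neq 0$ (using $T^*$ if the index is positive). The minimum in the final formula arises only \emph{after} you optimize over $\cM$ within each index class separately. As written, your paragraph suggests that for a single $T$ one may choose the smaller of the two values, which is false and would short-circuit the argument.

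Second, the mechanism by which the index is ``steered'' is not how $N(F)$ sits relative to $\cM$, but simply the codimension of $\cM$ in $\ell_2$. The paper proves (Lemma \ref{indice compr}) that $\ind(T_\cM)=\ind(F)-\dim\cM^\perp=n-\dim\cM^\perp$ when $\dim\cM^\perp<\infty$, and $\ind(T_\cM)=-\infty$ otherwise; hence the index-zero regime is exactly $\dim\cM^\perp=n$. In that regime the lower bounds $\|\,|F|_\cM\,\|\geq u_{n+1}(|F|)=C_\F$ and $m(|F|_\cM)\leq l_{n+1}(|F|)=A_\F\rai$ hold for \emph{every} such $\cM$ (not just attainability of $u_k$ equality, as you wrote), and one must exhibit a single $\cM$ achieving both extremes simultaneously (Proposition \ref{espectro compr}, item 3). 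In the nonzero-index regime one must also verify $m_e(T_\cM)\geq m_e(F)$ universally and construct $\cM$ with $\|T_\cM\|$ arbitrarily close to $m_e(F)$; finally the positive-index case must be checked separately to confirm it does no better than $1+m_e(F)$.
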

\begin{proof}
See the Appendix. 
\end{proof}

Recall that the Fredholm index of $T$ is defined as $\ind(T)=\dim N(T)-\dim N(T^*)$ if at least one of these numbers is finite (for a detailed account on the theory of Fredholm index see \cite{Conbook}).

\begin{rem} With the notations of Theorem \ref{thm inf excess} above, it turns out that (see the proof of Theorem \ref{thm inf excess} in the Appendix) there always exists a infinite dimensional closed subspace $\cM$ such that, given a coisometry $Y$ with $R(Y^*)=\cM$, then 
\[\alpha(\cF)= d_{\cU(\cM)}((Y^*F)_\cM).\]
Consequently, the existence of Parseval quasi-dual frames  is subject to the existence of unitary approximants of $(Y^*F)_\cM$. Indeed, if $U\in \cU(\cM)$ is an approximant of $(Y^*F)_\cM$, then, let $\hat{U}\in U(\cH)$ be such that $\hat{U}=U\oplus U'$ where $U'\in \cU(\cM^\perp)$. In such case it is readily seen that $X=Y\hat{U}\in \mathfrak{X}(\cF)$.

By \cite[Theorem 1.4]{Rogers}, if  $\ind(Y^*FP)=0$
(e.g., if $\alpha(\F)<1$), there are  unitary approximants for $(Y^*F)_\cM$. In particular, $\mathfrak{X}(\F)$ is not empty.

Denote by  $E(\cdot)$ the spectral measure of $|F|$. If $\ind(Y^*FP)\neq 0$ (which implies $\alpha(\F)=m_e(F)+1$) and
  $E([0, m_e(F)))$ is an orthogonal projection of infinite rank or if $m_e(F)$ is a cluster point of eigenvalues of $|F|$, then  the existence of unitary approximants and hence Parseval quasi-dual frames is ensured by   \cite[Theorem 1.4, (iii)]{Rogers}. 
\end{rem}

\begin{rem}\label{final rem}
Let $\F$ be a frame for $\cH$ that admits Parseval quasi-dual frames, i.e. such that $\mathfrak{X}(\F)\neq\emptyset$.
In this case, it seems interesting to search for optimal Parseval quasi-dual frames $X\in\mathfrak{X}(\F)$, i.e. such that 
\[\|F-X\|=\inf\{ \|F-Y\|: \, \cY\in\mathfrak{X}(\F)\}\ . \]
In case $\alpha(\F)=0$ then, for every $X\in\mathfrak{X}(\F)$ we have that   
\[ (F-X)(F-X^*)=S_\F-FX^*-XF^*+XX^*=S_\F-I \ .\] 
Thus, $\|F-X\|=\|S_\F-I\|^{1/2}$ and every Parseval quasi-dual frame $X\in\mathfrak{X}(\F)$ is optimal.

 If  $\alpha(\F)>0$ 
 then it is no longer true that $\|F-Y\|$ is constant for $Y\in \mathfrak{X}(\F)$. 
Indeed, 
 suppose that $F$ is such that  $\dim \overline{R(S_\F - A_\F\, I)}\leq \dim N(F)$ and $A_\F\rai +B_\F\rai\leq 2$. Then, by the proof of Proposition \ref{se alcanza}, there is a coisometry $Y$ such that $FY^*=A_\F\rai I$ and $\|FY^*-I\|=1-A_\F\rai=\alpha(\F)$. Then,
\[(F-Y)(F-Y)^*=S_\F+(1-2 \, A_\F\rai)I,\]
which implies that $\|F-Y\|=(B_\F+1-2A_\F\rai)\rai$. On the other hand, the proof of Proposition \ref{se alcanza} also shows that the coisometry $W$ of the polar decomposition of $F$ also satisfies $\|FW^*-I\|=\alpha(\F)$. But it is easy to see that $\|F-W\|=1-A_\F\rai$ which is equal to $B_\F+1-2A_\F\rai$ only if $B_\F=A_\F$.
\end{rem}

\section{Appendix}\label{App}

In this section we revise some facts related with unitary approximations of bounded operators in Hilbert spaces and we present the proof of Theorem \ref{thm inf excess}. Throughout this section $\cF$ denotes a frame with finite excess for an infinite dimensional Hilbert space $\cH$.  So that, if  $F\in B(\ell_2,\cH)$ denotes the frame operator of $\cF$,  $\,\dim (N(F))<\infty$.

Recall that given $T\in B(\cH)$, then $m(T)= \inf\{\|Tx\|, \quad \|x\|=1\}=\min \sigma(|T|)$, $\sigma_e(T)$ denotes the essential spectrum of $T$, $m_e(T)=\min \sigma_e(|T|)$ and $\|T\|_e= \max \sigma_e(|T|)$.

We begin by recalling one of the main results from \cite{Rogers}.
\begin{teo}[Theorem 1.3 in \cite{Rogers}]\label{Rogers} Let $T\in B(\cH)$, then,
\[d_{\cU(\cH)}(T):=\inf_{U\in \cU(\cH)} \|T-U\|=\begin{cases} &\max\{ \|T\|-1, 1-m(T)\} \; \mbox{ if } \ind(T)=0\\
                                            &\max \{\|T\|-1, 1+m_e(T)\} \;\mbox{ if } \ind(T)<0.
                                            \end{cases}   
\]                                            
(the case $\ind(T>0)$ follows  using $T^*$).
\end{teo}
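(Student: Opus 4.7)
The strategy splits into establishing a universal lower bound, matching it with an explicit unitary approximant when $\ind(T)=0$, and then upgrading both the bound and the approximant in the $\ind(T)<0$ case; the $\ind(T)>0$ case is reduced to the latter via duality.

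First, for any $U\in\U$ and any unit $x\in\cH$ the reverse triangle inequality gives $\|(T-U)x\|\geq \bigl|\|Tx\|-1\bigr|$; picking sequences of unit vectors $x_n$ with $\|Tx_n\|\to\|T\|$ and $\|Tx_n\|\to m(T)$ yields the universal lower bound $d_{\U}(T)\geq\max\{\|T\|-1,\,1-m(T)\}$. When $\ind(T)=0$ I would match this from above by extending the polar partial isometry. Writing $T=V|T|$ with $N(V)=N(T)$ and $R(V)=N(T^*)^\perp$, the equality $\dim N(T)=\dim N(T^*)$ lets one adjoin any isometry $N(T)\to N(T^*)$ to produce a unitary $\tilde V\in\U$. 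A direct computation gives $\tilde V^*T=|T|$, hence $\|T-\tilde V\|=\|\tilde V^*T-I\|=\||T|-I\|=\max\{\|T\|-1,\,1-m(T)\}$ by the spectral theorem, completing the $\ind(T)=0$ case.

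For $\ind(T)<0$ the harder lower bound $\|T-U\|\geq 1+m_e(T)$ is established via a homotopy-of-index argument. Let $T_t=(1-t)T+tU$, $t\in[0,1]$. Since $\ind(T_0)=\ind(T)<0$, $\ind(T_1)=\ind(U)=0$, and the Fredholm index is locally constant on the open set of Fredholm operators, there is $t^*\in(0,1)$ at which $T_{t^*}$ is not Fredholm, so there exist unit vectors $x_n\rightharpoonup 0$ with $T_{t^*}x_n\to 0$ (working with $T_{t^*}^*$ instead if only the cokernel blows up and invoking $m_e(T^*)=m_e(T)$, valid because $T$ is Fredholm when $\ind(T)$ is finite). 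From $(1-t^*)Tx_n+t^*Ux_n\to 0$ and $\|Ux_n\|=1$ one gets $\|Tx_n\|\to t^*/(1-t^*)$. Since the spectral projection $E_{|T|}([0,m_e(T)))$ has finite rank, the weak nullity of $x_n$ forces $\liminf\|Tx_n\|\geq m_e(T)$, hence $t^*/(1-t^*)\geq m_e(T)$, equivalently $1/(1-t^*)\geq 1+m_e(T)$. Substituting back, $(T-U)x_n\approx Tx_n/t^*$, so $\|T-U\|\geq\lim\|(T-U)x_n\|=1/(1-t^*)\geq 1+m_e(T)$, as desired.

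For the matching upper bound when $\ind(T)<0$ I would construct $\tilde V\in\U$ by a modified polar extension: first extend $V$ to an isometry whose cokernel $\cN$ has dimension $|\ind(T)|$, then re-route an infinite-dimensional slice $\cM\subset E_{|T|}([m_e(T),m_e(T)+\varepsilon])$ via a cyclic, anti-aligned rearrangement that simultaneously covers $\cN$ and reabsorbs the displaced image $V(\cM)$. The error on $\cM$ is at most $1+m_e(T)+O(\varepsilon)$, while off $\cM$ it equals $\||T|-I\|$, so letting $\varepsilon\to 0$ gives $\|T-\tilde V\|\leq\max\{\|T\|-1,\,1+m_e(T)\}$. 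The case $\ind(T)>0$ reduces by applying the result to $T^*$ and using $d_{\U}(T)=d_{\U}(T^*)$, $\|T^*\|=\|T\|$, and $m_e(T^*)=m_e(T)$. The main obstacle is this cyclic re-routing construction: the negative Fredholm index obstructs any naive isometric extension from being unitary, and one must delicately exploit the infinite multiplicity of $m_e(T)$ in $\sigma_e(|T|)$ to re-absorb the cokernel overflow without inflating the error past $1+m_e(T)$; this is precisely where Rogers' original paper does its real technical work.
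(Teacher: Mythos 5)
The paper does not actually prove this statement: it is imported verbatim as Theorem 1.3 of Rogers \cite{Rogers} and used as a black box in the Appendix (applied there to the compressions $(Y^*F)_\cM$). So there is no in-paper argument to compare yours against; I can only judge your reconstruction on its own terms. The skeleton is sound and contains the right ideas: the reverse-triangle-inequality lower bound, the extension of the polar partial isometry when $\ind(T)=0$, an index-jump argument along the segment from $T$ to $U$ for the lower bound $1+m_e(T)$, and a spectral re-routing for the matching upper bound. The homotopy computation ($\|Tx_n\|\to t^*/(1-t^*)$, $\liminf\|Tx_n\|\ge m_e(T)$ for unit $x_n\rightharpoonup 0$, hence $\|(T-U)x_n\|\to 1/(1-t^*)\ge 1+m_e(T)$) is correct and is a genuinely clean way to get the hard inequality.

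Four points need repair. (1) Your homotopy requires $T_0=T$ to be semi-Fredholm so that the index is locally constant at $t=0$; with the paper's definition $\ind(T)=\dim N(T)-\dim N(T^*)$ an operator can have $\ind(T)<0$ without closed range, hence not be semi-Fredholm. That case must be split off, but it is harmless: there $m_e(T)=0$, and unit vectors $x_n\rightharpoonup 0$ with $Tx_n\to 0$ give $\|(T-U)x_n\|\to 1=1+m_e(T)$ directly. (2) $E_{|T|}([0,m_e(T)))$ need not have finite rank (eigenvalues may accumulate at $m_e(T)$ from below); you must use $E_{|T|}([0,m_e(T)-\delta])$ and let $\delta\to 0$. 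Dually, $E_{|T|}([m_e(T),m_e(T)+\varepsilon])$ can be finite-dimensional, so the slice $\cM$ must be taken inside $R\bigl(E_{|T|}((\max\{0,m_e(T)-\varepsilon\},m_e(T)+\varepsilon))\bigr)$, which is infinite-dimensional because $m_e(T)\in\sigma_e(|T|)$ and $\dim N(T)<\infty$ when $\ind(T)<0$. (3) The ``cyclic, anti-aligned rearrangement'' is more mysterious than necessary: once $\cM$ is an infinite-dimensional spectral subspace of $|T|$ inside $N(T)^\perp$ with $\||T|x\|\le(m_e(T)+\varepsilon)\|x\|$ there, \emph{any} unitary from $N(T)\oplus\cM$ onto $N(T^*)\oplus V(\cM)$ works, since on that block the crude estimate $\|(T-W)x\|\le\|Tx\|+\|x\|\le(1+m_e(T)+\varepsilon)\|x\|$ already suffices, and the error ranges on the two blocks ($V(\cK)$ versus $V(\cM)\oplus N(T^*)$, with $\cK=N(T)^\perp\ominus\cM$) are orthogonal, so the norms combine as a maximum rather than a sum. (4) In the reduction of $\ind(T)>0$ to $T^*$, the identity $m_e(T^*)=m_e(T)$ is true for finite index but fails for $\ind(T)=+\infty$ (then $m_e(T)=0$ while $m_e(T^*)$ may be positive, e.g.\ a coisometry with infinite-dimensional kernel); the resulting formula must be written with $m_e(T^*)$ in that case. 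With these repairs your outline compiles into a complete proof.
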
  

Let $\cM$ be an infinite dimensional closed subspace of $\ell_2$ and consider a coisometry $Y\in B(\ell_2,\cH)$ such that $Y^*Y=P_\cM$.
Recall that our interest in the unitary approximation problem is motivated by Eq. \eqref{ec motivacion}. Hence, in order to apply Theorem \ref{Rogers} to the operator $(Y^*F)|_\cM\in B(\cM)$, we need to relate the Fredholm index of this operator to that of $F$ (which is finite, since we are assuming that $F$ is surjective and that $N(F)$ has finite dimension). In order to describe such a relation we introduce the following notation: given $T\in B(\cH)$ and $\cM\subseteq \cH$ a closed subspace then $T_\cM=P_\cM T|_\cM\in B(\cM)$ denotes the compression of $T$ to $\cM$.

\begin{lem}\label{indice compr}Let $T\in B(\cH)$. 
\begin{enumerate}
\item Let $\cM\subseteq \cH$ be a closed subspace with $\dim \cM^\perp<\infty$. If $\ind(T)\in \mathbb Z$ then $\ind(T_\cM)=\ind(T)$.
\item If $T$ is a closed range operator with $\ind(T)=-\infty$, then $\ind(T_{R(T)})=-\infty$.
\end{enumerate}
\end{lem}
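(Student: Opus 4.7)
The plan is to tackle the two parts independently via block-matrix decompositions of $T$. Part (1) will reduce to a finite-rank perturbation argument, while part (2) requires a closed-range stability result combined with a contradiction argument on the size of $N(T)$.

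For part (1), decompose $\cH=\cM\oplus\cM^\perp$ and write $T$ in block form
\[
T=\begin{pmatrix} T_\cM & B \\ C & D \end{pmatrix},
\]
where $B\colon\cM^\perp\to\cM$, $C\colon\cM\to\cM^\perp$, $D\colon\cM^\perp\to\cM^\perp$. Since $\dim\cM^\perp<\infty$, each of $B,C,D$ is of finite rank, so $T-(T_\cM\oplus I_{\cM^\perp})$ is a finite-rank (hence compact) operator. Because the Fredholm index is stable under compact perturbations, is additive on direct sums, and $\ind(I_{\cM^\perp})=0$, we conclude $\ind(T_\cM)=\ind(T)$.

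For part (2), decompose $\cH=R(T)\oplus N(T^*)=:\cH_1\oplus\cH_2$; note that $\dim\cH_2=\infty$ since $\ind(T)=-\infty$ together with $\dim N(T)<\infty$ forces $\dim N(T^*)=\infty$. With respect to this decomposition
\[
T=\begin{pmatrix} S & A \\ 0 & 0 \end{pmatrix},
\]
where $S=T_{R(T)}\colon\cH_1\to\cH_1$ and $A\colon\cH_2\to\cH_1$. Clearly $N(S)\subseteq N(T)$, so $\dim N(S)<\infty$; it remains to prove $\dim N(S^*)=\infty$, which I will establish by contradiction.

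The crucial preliminary is that $R(S)=T(R(T))$ is closed in $R(T)$. This follows from the classical restriction lemma that $T(W)$ is closed whenever $T$ has closed range and $W+N(T)$ is closed; applied with $W=R(T)$, the hypothesis holds because $R(T)+N(T)$ is the sum of a closed subspace and a finite-dimensional one. Now suppose for contradiction that $\dim N(S^*)<\infty$, so $\cH_1/R(S)$ is finite-dimensional. Since $R(T)=\cH_1$ equals $R(S)+R(A)$, the induced map $\pi\circ A\colon\cH_2\to\cH_1/R(S)$ is surjective onto a finite-dimensional space, so $A^{-1}(R(S))=N(\pi\circ A)$ has finite codimension in $\cH_2$ and is therefore infinite-dimensional. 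The projection $(x,y)\mapsto y$ then yields a short exact sequence
\[
0\to N(S)\to N(T)\to A^{-1}(R(S))\to 0,
\]
whence $\dim N(T)\geq \dim A^{-1}(R(S))=\infty$, contradicting $\dim N(T)<\infty$. Therefore $\dim N(S^*)=\infty$ and $\ind(T_{R(T)})=-\infty$. The main obstacle in this part is justifying the closedness of $R(S)$: without it the dimension count could fail because $A$ might map $\cH_2$ into $\overline{R(S)}\setminus R(S)$, inflating $A^{-1}(\overline{R(S)})$ without enlarging $A^{-1}(R(S))$. The restriction lemma for closed-range operators is precisely the tool that rules this out.
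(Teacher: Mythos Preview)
Your proof is correct. Part (1) is essentially the paper's own argument: both of you observe that $T$ differs from an operator whose index is manifestly $\ind(T_\cM)$ by a finite-rank perturbation, and invoke stability of the Fredholm index under compact perturbations.

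For Part (2) your route differs genuinely from the paper's. The paper computes the kernels of $T_\cM$ and $T_\cM^{\,*}$ directly: writing $\cM=R(T)$ and $\cN=N(T)$, it shows $N(T_\cM)=\cN\cap\cM$ and, by exhibiting an explicit isomorphism via $T^*$, that $\dim N(T_\cM^{\,*})=\dim(\cN^\perp\cap\cM^\perp)$. It then introduces auxiliary coisometries $X,Y$ with initial spaces $\cM$ and $\cN^\perp$, uses additivity of the index to get $\ind(YX^*)=-\infty$, and reads off $\dim(\cN^\perp\cap\cM^\perp)=\infty$ from the kernel computations for $YX^*$ and $XY^*$. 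Your argument instead stays inside the block decomposition, establishes that $R(S)=T(R(T))$ is closed via the standard restriction lemma (closed range plus $R(T)+N(T)$ closed), and derives a contradiction from the exact sequence $0\to N(S)\to N(T)\to A^{-1}(R(S))\to 0$. Your approach trades the coisometry trick for a closed-range fact and a short dimension count; it is arguably more self-contained, while the paper's approach has the virtue of identifying $\dim N(T_\cM^{\,*})$ exactly as $\dim(\cN^\perp\cap\cM^\perp)$ rather than merely showing it is infinite.
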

\begin{proof}
1. Let $P$ be the orthogonal projection onto $\cM$ and let $m=\dim N(P)=\dim \cM^\perp<\infty$. 
Notice that $\dim N(T_\cM) = \dim N(PTP)-m$ and $\dim N(T^*_\cM) =\dim N(PT^*P) -m$ so that $\ind(PTP)=\ind(T_\cM)$. Moreover,  $T=PTP+PT(I-P)+(I-P)T=PTP+K$, where $K$ is a finite rank operator. Then, $\ind(T)=\ind(PTP)=\ind(T_\cM)$.

2. Suppose now that $T$ is a semi-Fredholm operator  with $\ind(T)=-\infty$. Let $\cM=R(T)$, $\cN=N(T)$ and notice that by hypothesis $\dim \cM^\perp=\dim N(T^*)=\infty$, $\dim \cN<\infty$. This last fact shows that $\dim \cM=\infty$.

In this case we have that $N(T_\cM)=\cN\cap \cM$, and $\dim N(T_\cM \, ^*)=\dim \cN^\perp\cap \cM^\perp$; indeed, the first identity easily follows from the definition of $T_\cM$. For the second identity, notice that $T_\cM\,^*=P_\cM \,T^*|_\cM$ and $N(T_\cM^*)\subseteq \cM=N(T^*)^\perp$; hence $T^*|_{N(T_\cM\,^*)}:N(T_\cM\,^*)\rightarrow R(T^*)\cap \cM^\perp= \cN^\perp\cap\cM^\perp$ is a linear isomorphism i.e. a linear transformation with bounded inverse.

 Let $X,\,Y\in B(\cH)$ be coisometries with initial space $\cM$ and $\cN^\perp$ respectively. Hence, $\ind(X)=\dim \cM^\perp=\infty$ and $\ind(Y)=\dim \cN$.  Then, by the additivity property of the index for (left) semi-Fredholm operators, $\ind(YX^*)=\ind(Y)+\ind(X^*)=-\infty$. On the other hand, arguing as before it is easy to see that 
 $\dim N(XY^*) =\dim \cM\cap \cN$ and $\dim N(YX^*) =\dim \cM^\perp\cap \cN^\perp$. Hence, $\ind(T_\cM)=\dim \cM\cap \cN -  \dim \cM^\perp\cap \cN^\perp=-\infty$.
\end{proof}

Given $T\in B(\cH)$, a useful way to compute $m_e(T)$ and $\|T\|_e$ is by using  the maps 
\[U_k(T)=\sup_{E\in \cP(\cH), \; \tr E\leq k} \tr (|T|E) \quad \text{ and }\quad L_k(T)=\inf_{ E\in \cP(\cH), \; \tr E\leq k} \tr(|T|E)\, , \] where $\cP(\cH)$ denotes the set of orthogonal projections in $B(\cH)$ and $\tr(\cdot)$ denotes the usual (semifinite) trace in $B(\cH)$.
Indeed, by \cite[Prop. 3.5]{AMRS} we have that 
 \begin{equation}\label{eq me ne}
 m_e(T)=\lim_{k\rightarrow \infty} \, \frac{L_k(T)}{k} \quad \text{ and } \quad \|T\|_e=\lim_{k\rightarrow \infty} \, \frac{U_k(T)}{k}.
 \end{equation}

On the other hand, for $n\in\NN$ we let
\[u_n(T)=\sup \{\min \sigma(|T|_\cM)\; : \; \dim(\cM)=n\} \quad \text{and} \quad l_n(T)=\inf \{\max \sigma(|T|_\cM)\; : \; \dim(\cM)=n\}.\]
In this way we obtain the non-increasing sequence $(u_n(T))_{n\in\NN}$ and the non-decreasing sequence $(l_n(T))_{n\in\NN}$. 
Denote by  $E(\cdot)$ the spectral measure of $|T|$. 
Then, it is easy to see that $u_n=\|T\|_e$ if the range of the projection $E((\|T\|_e, \|T\|])$ is a subspace of dimension $k<n$. Otherwise $u_n=\la_n$ if $\la=(\la_i)_{i=1}^m$ are the eigenvalues (counting multiplicity) arranged in a decreasing order of the finite rank operator $E(I)|T|E(I)$, where $I\subset (\|T\|_e, \|T\|]$ is any interval such that $n<\rk(E(I))=m<\infty$. There is an obvious analogue for $l_n$ using the eigenvalues of $|T|$ strictly smaller than $m_e(T)$.  

\begin{pro}\label{espectro compr} Let $T$ be a positive operator with $\dim(N(T))=n$ and let $\cM\subseteq \cH$ be a closed subspace with $\dim \cM=\infty$. Let $u_{n+1}=u_{n+1}(T)$ and $l_{n+1}=l_{n+1}(T)$.
 Then,
\begin{enumerate}
\item $m_e(T)\leq m_e(T_\cM).$ 
\item If we assume further that $\dim \cM^\perp =n$ then $m(T_\cM)\leq l_{n+1}\leq u_{n+1}\leq \|T_\cM\|.$
\end{enumerate}
Moreover,  for every $\eps>0$ there exist infinite dimensional closed subspaces  $\cN_\eps$ and $\cM$ such that  $\dim(\cM^\perp)=n$ and 
\begin{enumerate}
\item[3.] $m(T_{\cM})= l_{n+1}$ and $\|T_{\cM}\|= u_{n+1}$.
\item[4.] $m_e(T_{\cN_\eps})=m_e(T)$ and $\|T_{\cN_\eps}\|\leq m_e(T)+\eps$.
\end{enumerate}
\end{pro}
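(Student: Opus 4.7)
Parts 1 and 2, as well as the easy direction of part 4, follow from min--max (Courant--Fischer) arguments. For part 2 the decisive observation is that, since $\dim\cM^\perp=n$, the restriction $P_{\cM^\perp}|_V$ has rank at most $n$ for every $(n+1)$-dimensional subspace $V\subseteq\cH$, so $V\cap\cM\neq\{0\}$; any unit vector $v$ in the intersection satisfies $\langle Tv,v\rangle=\langle T_\cM v,v\rangle\in[m(T_\cM),\|T_\cM\|]$, and plugging this into the definitions of $l_{n+1}(T)$ (as $\inf_V\max$) and $u_{n+1}(T)$ (as $\sup_V\min$) immediately yields $m(T_\cM)\leq l_{n+1}(T)$ and $u_{n+1}(T)\leq\|T_\cM\|$. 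The intermediate inequality $l_{n+1}\leq u_{n+1}$ follows from the sandwich $l_{n+1}(T)\leq m_e(T)\leq\|T\|_e\leq u_{n+1}(T)$, whose outer bounds use that $E_T(J)$ has infinite rank for every open neighborhood $J$ of a point of $\sigma_e(T)$. Part 1 is analogous: if $\lambda<m_e(T)$, then $E_T([0,\lambda])$ has finite rank, and any subspace of $\cM$ on which $\langle T_\cM x,x\rangle\leq\lambda\|x\|^2$ satisfies the same estimate for $T$ (because $P_\cM x=x$ on $\cM$); by the min--max principle this bounds its dimension by $\rk E_T([0,\lambda])<\infty$, so $E_{T_\cM}([0,\lambda])$ has finite rank and hence $\lambda\leq m_e(T_\cM)$.

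For part 4 I would take $\cN_\eps:=R(E_T([m_e(T),m_e(T)+\eps]))$; since $m_e(T)\in\sigma_e(T)$ this subspace is infinite-dimensional, and being a spectral subspace it is $T$-invariant, so $T_{\cN_\eps}=T|_{\cN_\eps}$ has spectrum inside $[m_e(T),m_e(T)+\eps]$. This gives $\|T_{\cN_\eps}\|\leq m_e(T)+\eps$, and the equality $m_e(T_{\cN_\eps})=m_e(T)$ follows from part 1 together with the observation that the restricted spectral measure still has $m_e(T)$ in its essential support.

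The main obstacle is part 3, which calls for an explicit construction of $\cM^\perp$. Two structural facts drive the argument: by the very definition of $l_{n+1}(T)$ the only eigenvalues of $T$ strictly below $l_{n+1}$ are the $n$ zeros coming from $N(T)$; and by the definition of $u_{n+1}(T)$ the eigenvalues strictly above $u_{n+1}$ form a finite collection $\lambda_1\geq\dots\geq\lambda_{r_+}$ with $r_+:=\rk E_T((u_{n+1}(T),\|T\|])\leq n$. Fix an orthonormal basis $\{z_1,\dots,z_n\}$ of $N(T)$ and eigenvectors $w_1,\dots,w_{r_+}$ with $Tw_j=\lambda_j w_j$, and define
\[
v_j:=\cos\theta_j\,z_j+\sin\theta_j\,w_j\ \ (1\leq j\leq r_+), \qquad v_j:=z_j\ \ (r_+<j\leq n),
\]
where the rotation angles satisfy $\cos^2\theta_j=u_{n+1}(T)/\lambda_j\in(0,1)$. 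Set $\cM^\perp:=\mathrm{span}\{v_1,\dots,v_n\}$. A direct computation on each two-dimensional block $\mathrm{span}(z_j,w_j)$ shows that $v_j':=-\sin\theta_j\,z_j+\cos\theta_j\,w_j\in\cM$ is an eigenvector of $T_\cM$ with eigenvalue $\lambda_j\cos^2\theta_j=u_{n+1}(T)$; meanwhile on $\bigl(N(T)\oplus R(E_T((u_{n+1},\|T\|]))\bigr)^\perp\subseteq\cM$ the compression $T_\cM$ agrees with $T$ itself, whose spectrum there lies in $[l_{n+1}(T),u_{n+1}(T)]$ with both endpoints attained (either as isolated eigenvalues of $T$ or as points of $\sigma_e(T)$ inside the interval). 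Hence $\sigma(T_\cM)\subseteq[l_{n+1},u_{n+1}]$ with both equalities, giving $m(T_\cM)=l_{n+1}(T)$ and $\|T_\cM\|=u_{n+1}(T)$. The delicate point is verifying that nothing spectral leaks outside $[l_{n+1},u_{n+1}]$ once the rotations are performed, which rests precisely on the two structural facts recalled above.
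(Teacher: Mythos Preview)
Your argument is correct. For items 1, 2, and 4 it coincides with the paper's: the paper proves item 1 via the trace formula $m_e(T)=\lim_k k^{-1}L_k(T)$ (which encodes the same min--max principle you invoke), obtains item 2 by the identical dimension count producing a unit vector in $\cS_\eps\cap\cM$, and for item 4 takes the spectral subspace $R\bigl(E_T((m_e(T)-\eps,m_e(T)+\eps))\bigr)$, which differs from your $R\bigl(E_T([m_e(T),m_e(T)+\eps])\bigr)$ only by a finite-rank piece.

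The one place where the routes genuinely diverge is item 3. The paper sets $\cS=N(T)\oplus\cE$ with $\cE$ spanned by the top $k=\min\{n,r\}$ eigenvectors, notes that the eigenvalue list of $T_\cS$ is $(u_1,\dots,u_k,0,\dots,0)$, and then \emph{invokes the Fan--Pall interlacing theorem} (Theorem~\ref{FanPallTeo}) to obtain a $k$-dimensional subspace $\cT\subseteq\cS$ with $P_\cT T P_\cT=u_{k+1}P_\cT$; setting $\cM=\cS^\perp\oplus\cT$ gives $T_\cM=T|_{\cS^\perp}\oplus u_{k+1}I_\cT$ in block form, whence $\|T_\cM\|=u_{n+1}$ and $m(T_\cM)=m(T|_{\cS^\perp})=l_{n+1}$. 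Your explicit $2\times2$ rotations pairing each kernel vector $z_j$ with a top eigenvector $w_j$ are exactly a hands-on realization of this Fan--Pall step for the specific target value $u_{n+1}$: your $\cM^\perp=\mathrm{span}\{v_j\}$ plays the role of $\cS\ominus\cT$, and your computation $T_\cM v_j'=\lambda_j\cos^2\theta_j\,v_j'=u_{n+1}v_j'$ reproduces the block $u_{n+1}I_\cT$. What your version buys is self-containment (no appeal to Fan--Pall) and a transparent verification that no spectrum leaks outside $[l_{n+1},u_{n+1}]$; what the paper's version buys is brevity and the observation that the construction is an instance of a general interlacing principle already used in the finite-dimensional part of the paper.
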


\begin{proof}
Let $P$ be the orthogonal projection onto $\cM$. Then, by Eq. \eqref{eq me ne} we get that
\begin{align*} 
m_e(T_\cM)&=\lim_{k\rightarrow \infty} \frac{1}{k} \inf\{\tr(T_\cM E)\, : \,E\in \cP(\cM), \, \tr(E)\leq k\}\\
&=\lim_{k\rightarrow \infty} \frac{1}{k} \inf\{\tr(TE)\, : \,E\in \cP(\cH), \, \tr(E)\leq k, \; R(E)\subset \cM\}\\
&\geq \lim_{k\rightarrow \infty} \frac{1}{k} \inf\{\tr(TE)\, : \, E\in \cP(\cH) \, \tr(E)\leq k\}=m_e(T)\, .
\end{align*}

Assume further that $\dim \cM^\perp=n$. Given $\eps>0$, let $\cS_\eps$ be a $n+1$-dimensional subspace of $\cH$ such that $\min \sigma(T_{\cS_\eps})>u_{n+1}-\eps$. We claim that $\cS_\eps\cap \cM\neq \{0\}$: indeed, if $\cS_\eps\cap \cM= \{0\}$ then $P_{\cM^\perp}|_{\cS_\eps}:\cS_\eps\rightarrow \cM^\perp$ is an injection, which contradicts the fact that $\dim \cS_\eps>\dim \cM^\perp$. Therefore, if $x\in \cS_\eps\cap\cM$ with $\|x\|=1$ then 
\[\pint{ T_\cM  x\, , \, x}=\pint{T x\, , \, x}=\pint{T_{\cS_\eps} x\, , \, x} \geq u_{n+1}-\eps \, .\]
Since $\eps$ was arbitrary, we see that  $\|T_\cM\|\geq u_{n+1}$. The proof for the lower bound is similar.

In order to finish the proof, we exhibit the subspaces $\cM_\eps$ and $\cN_\eps$ as above.
If $\|T\|=\|T\|_e$ we just take $\cM_\eps=N(T)^\perp$ and we are done. In case that $\|T\|>\|T\|_e$, define $r=\dim R(E\left((\|T\|_e, \|T\|]\right)$ and let $k=\min\{n, r\}$. Notice that in this case, $u_1\geq \cdots\geq u_k$ are eigenvalues of $T$. Denote by $\cS=N(T)\oplus \cE$, where $\cE$ is the $k$-dimensional subspace generated by eigenvectors associated to $u_1, \ldots, u_k$. Notice that $\|T_{\cS^\perp}\|=u_{k+1}$ and that the $n+k$ eigenvalues of $T_\cS$ (counting multiplicities and arranged in non-increasing order) are $u_1,\ldots,u_k, 0,\ldots,0$. Therefore, by Theorem \ref{FanPallTeo} there exists a $k$-dimensional subspace of $\cS$, denoted by $\cT$ such that $P_\cT TP_\cT =u_{k+1}P_\cT$. Thus, if we define 
$\cM=\cS^\perp\oplus \cT$ we obtain a subspace with $\dim \cM^\perp=n$ and such that $\|T_\cM\|=u_{k+1}$. Therefore, if $n<r$ (and hence $k=n$) we see that $\|T_\cM\|=u_{k+1}=u_{n+1}$; otherwise $n\geq r$ (so that $k=r$) and hence $\|T_\cM\|=u_{r+1}=\|T\|_e=u_{n+1}$.

Finally, if $Q=E((m_e(T)-\eps, m_e(T)+\eps))$ then $Q$ is a orthogonal projection with infinite dimensional range $\cN_\eps=R(Q)$ and $m_e(T_{\cN_\eps})=m_e(T)$, $\|T_{\cN_\eps}\|\leq m_e(T)+\eps$.
\end{proof}

Next we present the proof of the main result of Section \ref{sec fra fin}. Recall that the Fredholm index of $T$ is defined as $\ind(T)=\dim N(T)-\dim N(T^*)$ if at least one of these numbers is finite. 

\begin{proof}[of Theorem \ref{thm inf excess}]
Let $Y\in B(\ell_2,\cH)$ be a fixed coisometry with initial space $\cM\subset \ell_2$ i.e. $YY^*=I_\cH$ and $Y^*Y=P_\cM$, where $P_\cM$ denotes the orthogonal projection onto $\cM$. Then, as explained at the beginning of Section \ref{sec fra fin} (see Eq. \eqref{ec motivacion})
\[\inf \{\|FX^*-I\|\,:\; X^*X=P_{\cM},\; XX^*=I_\cH\}= d_{\cU(\cM)}((Y^*F)_\cM).\]
Therefore, we have that 
\begin{equation}\label{calc alp}
 \alpha(\F)=\inf\{d_{\cU(\cM)}((Y^*F)_\cM): YY^*=I_\cH\, , \ Y^*Y=P_\cM\}\ .
\end{equation}
Notice that $N((Y^*F)_\cM)=N(F)\cap \cM$ and that $\dim N((Y^*F)_\cM^*)=\dim R(F^*)\cap\cM^\perp$. In particular, $\dim N((Y^*F)_\cM)\leq n$ and therefore $\ind((Y^*F)_\cM)\leq n$ and it is well defined.

Now, we claim that $\ind((Y^*F)_\cM)=0$ if and only if $\dim(\cM^\perp)=n$. 
Indeed, if we assume that $\dim(\cM^\perp)=\infty$, then
$R(Y^*F)=\cM$ and $\ind(Y^*F)=-\infty$; hence, by Lemma \ref{indice compr} we see that $\ind((Y^*F)_\cM)=-\infty$ in this case. On the other hand, if 
$\dim\cM^\perp=m$ 
 then, by  Lemma \ref{indice compr} and the additivity of the Fredholm index for (left) semi-Fredholm operators, $\ind((Y^*F)_\cM)=\ind(Y^*F)=\ind(Y^*)+\ind(F)=n-m$. 

Hence, if we take $\cM\subseteq\ell_2$ such that $\dim (\cM^\perp)=n$ then the previous facts together with Theorem  \ref{Rogers} imply that 
\[ d_{\cU(\cM)}((Y^*F)_\cM)=\max\{\| \, |F|_\cM\| -1 \, , \, 1-m(\,|F|_\cM)\}\]
since $|(Y^*F)_\cM|^2=(|F|^2)_\cM$ so that $\|\, |(Y^*F)_\cM|\, \|= \| |F|_\cM\|$ and $m(|(Y^*F)_\cM|)=m(|F|_\cM)$. Moreover, using the fact that $l_{n+1}(|F|)=A_\F$ and $C_\F=u_{n+1}(|F|)$, then
items 1. and 3. in Proposition \ref{espectro compr} show that 
\begin{equation}\label{eq teo 1}\inf \{d_{\cU(\cM)}((Y^*F)_\cM) :\, YY^*=I_\cH \, , \, Y^*Y=P_\cM \, , \, \ind((Y^*F)_\cM)=0
\}=\max\{1-A_\F\rai, C_\F-1\} .
\end{equation}
On the other hand, if $\dim(\cM^\perp)\neq n$, then $\ind((Y^*F)_\cM)\neq 0$. 
Thus, by Theorem \ref{Rogers} and Proposition \ref{espectro compr} we conclude that 
\begin{equation}\label{eq teo 2} \inf \{d_{\cU(\cM)}((Y^*F)_\cM)\, :\, YY^*=I_\cH\, , \  Y^*Y=\cM 
\, , \ \ind((Y^*F)_\cM)<0 \}
=1+m_e(F)\ .\end{equation}
Finally, if we assume that $\ind((Y^*F)_\cM)>0$ then, as shown above, $\dim \cM^\perp=m<n<\infty$. Thus 
$|(Y^*F)_\cM^*|^2= Y^*(F\,P_\cM F^*)Y|_\cM$ and hence, since $Y$ is a coisometry with initial space $\cM$, we see that $m_e((Y^*F)_\cM^*)=m_e(F\,P_\cM\, F^*)^{1/2}$. Now, notice that $\dim N(F)<\infty$ implies that $m_e(FF^*)=m_e(F^*F)$
(e.g. \cite[Proposition 4.5]{AMRS} shows that $(FF^*\oplus \,0_n)$ and $F^*F$ are unitarily equivalent, where $0_n$ is the zero operator acting on an $n$-dimensional Hilbert space). Then,
 \[
m_e((Y^*F)_\cM^*)=\,m_e(F\,P_\cM\,F^*)\rai=\, m_e(FF^*)\rai =\, m_e(F^*F)\rai=\, m_e(F)\, , 
 \]
  since $FF^*=F\,P_\cM\,F^*+F\,P_{\cM^\perp}\,F^*$ and $F\,P_{\cM^\perp}\,F^*$ is a finite rank operator. Thus, by Theorem \ref{Rogers} we get that
\begin{equation}\label{eq teo 3} \inf \{d_{\cU(\cM)}((Y^*F)_\cM)\, :\, YY^*=I_\cH\, , \  Y^*Y=\cM 
\, , \ \ind((Y^*F)_\cM)>0 \}
\geq 1+m_e(F)\ .\end{equation}
The result follows by combining Eqs. \eqref{calc alp}, \eqref{eq teo 1}, \eqref{eq teo 2} and \eqref{eq teo 3}.
\end{proof}

\noindent {\bf Acknowledgements:} We thank Professor Jorge Antezana for useful suggestions regarding the material in this note. This work is partially supported by UBACYT I023,  CONICET (PICT  808/08 and PIP 0435) and UNLP 11X585.

\medskip

\noindent G. Corach \\
e-mail: {gcorach@fi.uba.ar}

\medskip

\noindent P. Massey\\
e-mail: {massey@mate.unlp.edu.ar}

\medskip

\noindent M. Ruiz\\
e-mail: {mruiz@mate.unlp.edu.ar}



\end{document}